\newtheorem{theorem}{Theorem}
\newtheorem{lemma}[theorem]{Lemma}
\newtheorem{corollary}[theorem]{Corollary}
\declaretheoremstyle[notefont=\bfseries,notebraces={}{},%
    headpunct={},postheadspace=1em,spaceabove=0.5em,spacebelow=0.5em]{mystyle}
\declaretheorem[style=mystyle,numbered=no,name=Theorem]{thm-hand}
\declaretheorem[style=mystyle,numbered=no,name=Conjecture]{conj-hand}
\declaretheorem[style=mystyle,numbered=no,name=Definition.]{defn-hand}
\declaretheorem[style=mystyle,numbered=no,name=Theorem.]{thm-no}
\declaretheorem[style=mystyle,numbered=no,name=Conjecture.]{conj-no}
\declaretheorem[style=mystyle,numbered=no,name=Lemma.]{lemma-no}
\declaretheorem[style=mystyle,numbered=no,name=Lemma]{lemma-hand}
\def\resetMathstrut@{%
  \setbox\z@\hbox{%
    \mathchardef\@tempa\mathcode`\[\relax
    \def\@tempb##1"##2##3{\the\textfont"##3\char"}%
    \expandafter\@tempb\meaning\@tempa \relax
  }%
  \ht\Mathstrutbox@\ht\z@ \dp\Mathstrutbox@\dp\z@}
\newtheorem{problem}[theorem]{Problem}
\newtheorem{question}[theorem]{Question}
\theoremstyle{definition}
\newtheorem{definition}[theorem]{Definition}
\theoremstyle{remark}
\title{On the Discrepancy Between Two Zagreb Indices}
\author{Ashwin Sah}
\thanks{Massachusetts Institute of Technology, Cambridge MA. Email: \texttt{asah@mit.edu}}
\date{\today}
\author{Mehtaab Sawhney}
\thanks{Massachusetts Institute of Technology, Cambridge MA. Email: \texttt{msawhney@mit.edu}}
\begin{document}

\maketitle
\begin{abstract}
We examine the quantity 
\[S(G) = \sum_{uv\in E(G)} \min(\deg u, \deg v)\]
over sets of graphs with a fixed number of edges.
The main result shows the maximum possible value of $S(G)$
is achieved by three different classes of constructions,
depending on the distance between the number of edges
and the nearest triangular number.
Furthermore we determine the maximum possible value
when the set of graphs is restricted to be bipartite, a forest,
or to be planar given sufficiently many edges.
The quantity $S(G)$ corresponds to the difference between two well studied indices,
the irregularity of a graph and the sum of the squares of the degrees in a graph.
These are known as the first and third Zagreb indices in the area of mathematical chemistry.
\end{abstract}
\section{Introduction}
\subsection{The specialty of a graph}
The following question appeared on the Team Selection Test for the 2018 United States
International Math Olympiad team.
\begin{problem}
At a university dinner, there are 2017 mathematicians who each order two distinct entr\'{e}es, with no two mathematicians ordering the same pair of entr\'{e}es. The cost of each entr\'{e}e is equal to the number of mathematicians who ordered it, and the university pays for each mathematician's less expensive entr\'{e}e (ties broken arbitrarily). Over all possible sets of orders, what is the maximum total amount the university could have paid?
\end{problem}
This problem, posed by Evan Chen, proved extremely challenging for contestants,
with only one full solution given on the contest.
We can rephrase the question in more graph theoretic terms.
\begin{definition}
Define the \textbf{specialty} of a graph $G$ to be
\[S(G) = \sum_{uv\in E(G)} \min(\deg u, \deg v)\]
where $E(G)$ is the edge set of a graph $G$.
\end{definition}
The question posed to the contestants therefore is equivalent to evaluating
\[F(2017)=\max_{G\text{ has 2017 edges}}S(G).\]
The given solutions relied heavily on the fact that $2017=\binom{64}{2}+1$, and therefore the maximizing graph is near a complete graph. The purpose of this note is to determine \[F(N)=\max_{G\text{ has $N$ edges}}S(G)\] 
in general,
as well as determine the maximum when $G$ is further restricted to be bipartite, a forest, or planar given sufficiently many edges in the final case. 

\subsection{Relation to Zagreb indices}
The specialty of a graph is intimately related to two quantities of a graph, the irregularity of a graph and the sum of the squares of the degrees.
First, Albertson \cite{albertson1997irregularity} defines the irregularity of $G$,
which we denote as $M_3(G)$, to be
\[  M_3(G) = \sum_{uv\in E(G)}
	\left\lvert \deg u - \deg v \right\rvert.\]
Fath-Tabar \cite{fath2011old} also defines this as the third Zagreb index,
hence the choice of notation.
Tavakoli and Gutman \cite{tavakoli2013extremely} as well as Abdo, Cohen, and Dimitrov \cite{abdo2012bounds} independently determined the maximum of $M_3(G)$ over all graphs with $n$ vertices.

On the other hand if the minimum of the degrees is replaced with a sum of the degrees in the definition of specialty, the corresponding quantity
\[M_1(G) = \sum_{uv\in E(G)} (\deg u + \deg v) = \sum_{v\in V(G)} (\deg v)^2\]
roughly counts the number of directed paths of length $2$ in $G$.
The problem of maximizing this quantity over all graphs with a particular number of edges and vertices was a problem introduced in 1971 by Katz \cite{katz1971rearrangements}. The first exact results in this problem were given by Ahlswede and Katona who in essence demonstrated that the maximum value is achieved on at least one of two possible graphs called the quasi-complete and quasi-star graphs \cite{ahlswede1978graphs}. However, as Erd{\H o}s remarked in his review of the paper, ``the solution is more difficult than one would expect'' \cite{Er}. \'{A}brego, Fern\'{a}ndez-Merchant, Neubauer, and Watkins furthered this result by determining the exact maximum in all cases \cite{abrego2009sum}. However, given the complexity of the exact value of the upper bound, there was considerable interest in giving suitable upper bounds and a vast literature of such bounds developed. See \cite{cheng2009extreme}, \cite{das2004maximizing}, \cite{das2015zagreb}, \cite{de1998upper}, \cite{nikiforov2007sum}, \cite{nikiforov2007sum}, \cite{zhou2006note}, \cite{zhou2007remarks} for many results of this type. Many of these results stem from the area of mathematical chemistry and the above quantity is referred to as the first Zagreb index, $M_1(G)$. In this context, using the notation in \cite{fath2011old}, we resolve the problem of maximizing
\begin{align*}
	S(G) &= \frac{1}{2}\sum_{uv\in E(G)} (\deg u + \deg v - 
		\left\lvert \deg u - \deg v \right\rvert) \\
	&= \frac{M_1(G) - M_3(G)}{2},
\end{align*}
that is, the discrepancy between two of these already-studied graph invariants, over graphs with a fixed number of edges. Note that both $M_1(G)$ and $M_3(G)$ can both trivially have order of the square of the number of edges, and in this paper we in fact show that $S(G)$ has a strictly lower order. Furthermore, the maximum of $S(G)$ being of lower order extends to when $G$ is restricted to be a bipartite graph, a forest, or a planar graph. (The maximum value of $M_1(G)$ over a fixed number of edges is achieved by a star \cite{ahlswede1978graphs}. For $M_3(G)$ the maximum value over the set of all trees is achieved by a star \cite{li2016irregularity} and one can easily check this extends to all planar graphs.)

\subsection{Combinatorial interpretation}
We end with an alternate combinatorial interpretation of $S(G)$
arising through the related $S'(G)$ where
\[ S'(G) = \frac{1}{3} \sum_{uv\in E(G)} (\min(\deg u, \deg v)-1)
	= \frac13 S(G) - \#E(G) .\]
Note that $S'(G)$ provides a trivial upper bound for the number of triangles in a graph $G$ and a solution to the initial problem therefore provides an upper bound for the number of triangles in a graph with a specified number of edges.

Erd\H{o}s gave a remarkably short proof that for graphs with $N=\binom{n}{2}+m$ edges
(with $1\le m\le n$), the maximum number of triangles
is achieved on a complete graph with $n$ vertices
and an additional vertex connected to $m$ vertices in the clique \cite{erdos1962number}.
The remarkable fact therefore is that the maximum of $S(G)$
is not always achieved on the same graphs
as those that maximize the number of triangles,
despite the optimal constructions agreeing for infinitely many integers
(with a density of $\frac{2}{5}$).

\section{Maximum Specialty over all Graphs}
We will show the following result, which determines $F(N)$ in general.
\begin{theorem}\label{main}
Represent $N = \binom{n}{2} + m, 1\le m\le n$ uniquely.
Then the maximum value of $S(G)$ on a graph on $N$ edges is attained on a graph $G$ (which is not necessarily unique):
\begin{enumerate}
\item[i.] If $1\le m\le\frac{2n - 3}{5}$, then $G$ is a clique of size $n$ and an additional vertex $v$ which connects to $m$ vertices in the clique. Then $F(N)=(n-1)\binom{n}{2}+\frac{m(3m-1)}{2}$ in this case.
\item[ii.] If $\frac{2n - 3}{5}\le m\le\frac{n - 1}{2}$, then $G$ consists of three parts. 

\begin{itemize}
    \item A clique of size $2m$ missing $m$ disjoint edges
    \item A clique of size $n - 2m$ with every vertex in this clique connected to every vertex in the previous ``almost-clique"
    \item A single vertex connected to every vertex in the ``almost-clique" of size $2m$ but to no vertices in the clique of size $n - 2m$
\end{itemize}
In this case $F(N)=(n - 1)\binom{n}{2} + m(4m - n + 1)$.
\item[iii.] If $\frac{n - 1}{2} < m\le n$, then $G$ is a ${K}_{n + 1}$ missing $n - m$ disjoint edges in the clique. In this case $F(N)=n\binom{n}{2} + m(2m - n).$
\end{enumerate}
\end{theorem}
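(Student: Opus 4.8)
The plan is to establish matching lower and upper bounds; the three stated values form the lower bound, and the difficulty lies entirely in the upper bound. The lower bounds are a direct computation: for each of the three families one writes out the degree sequence and evaluates $S(G)=\sum_{uv\in E}\min(\deg u,\deg v)$ edge-by-edge, obtaining $f_{\mathrm i}(m)=(n-1)\binom n2+\tfrac{m(3m-1)}2$, $f_{\mathrm{ii}}(m)=(n-1)\binom n2+m(4m-n+1)$, and $f_{\mathrm{iii}}(m)=n\binom n2+m(2m-n)$. The two break-points then arise transparently. A one-line computation gives $f_{\mathrm{ii}}(m)-f_{\mathrm i}(m)=\tfrac m2(5m-2n+3)$, so among these two always-realizable constructions the second overtakes the first exactly at $m=\tfrac{2n-3}5$. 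For the second break-point one computes $f_{\mathrm{iii}}(m)-f_{\mathrm{ii}}(m)=\binom n2-m(2m+1)$, which vanishes at $m=\tfrac{n-1}2$; here the mechanism is realizability rather than a value comparison, since family (ii) requires $n-2m\ge 0$ while family (iii) requires that $K_{n+1}$ actually admit $n-m$ disjoint missing edges, and these two ranges meet precisely at $m=\tfrac{n-1}2$, where the two constructions coincide.

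For the upper bound I would first record a structural identity. Order the non-isolated vertices as $v_1,\dots,v_k$ with $d_1\ge\cdots\ge d_k$, breaking ties arbitrarily, and let $b_j$ be the number of neighbours of $v_j$ among $v_1,\dots,v_{j-1}$. Since every edge is counted once, at its higher-indexed and hence no-larger-degree endpoint,
\[ S(G)=\sum_{j=1}^k d_j b_j,\qquad \sum_{j=1}^k b_j=N,\qquad 0\le b_j\le\min(j-1,d_j). \]
Thus for a fixed degree sequence $\mathbf d$ the quantity $S(G)$ is bounded by the value $\Psi(\mathbf d)$ of the linear program maximizing $\sum_j d_j b_j$ over the displayed polytope, and because $d_j$ is non-increasing this inner program is solved greedily: saturate $b_j=\min(j-1,d_j)$ from the smallest index upward until the budget $\sum_j b_j=N$ is spent. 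One checks that each of the three constructions saturates its own caps, so that equality $S=\Psi(\mathbf d)$ holds there; it therefore suffices to prove $\Psi(\mathbf d)\le F(N)$ for every graphical degree sequence $\mathbf d$ with $\sum_j d_j=2N$.

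The heart of the argument is this last discrete optimization. The caps $\min(j-1,d_j)$ penalise a large maximum degree, since a vertex can contribute at most the number of strictly earlier vertices, so $\Psi$ favours degree sequences that are simultaneously as large and as balanced as possible; an exchange/smoothing argument on $\mathbf d$ should push any optimizer to one of a short list of near-complete shapes, which upon substituting $N=\binom n2+m$ return exactly $f_{\mathrm i},f_{\mathrm{ii}},f_{\mathrm{iii}}$. The main obstacle is that this optimization is genuinely constrained by graphicality (Erd\H{o}s--Gallai): indeed $f_{\mathrm{ii}}(m)>f_{\mathrm{iii}}(m)$ for $m>\tfrac{n-1}2$, so the bound $\Psi(\mathbf d)\le F(N)$ can hold in regime (iii) \emph{only} because the family-(ii)-type degree sequences that would realize the larger value $f_{\mathrm{ii}}$ fail to be graphical there, demanding a matching larger than the graph can contain. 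Controlling the non-smooth interaction between the caps, the budget, and the graphicality constraints across the three ranges is where the real work lies.

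A useful preliminary observation, worth isolating as a lemma, is that the budget constraint $\sum_j b_j=N$ cannot be dropped from the relaxation: the cruder estimate $S(G)\le\sum_j d_j\min(j-1,d_j)$ already overshoots for $K_{n+1}$ minus a matching, whereas reinstating $\sum_j b_j=N$ restores tightness. Keeping that single global constraint is what makes $\Psi(\mathbf d)$ a sharp enough proxy for $S(G)$ to recover the exact values $f_{\mathrm i},f_{\mathrm{ii}},f_{\mathrm{iii}}$, and I expect the bulk of the proof to be the case analysis verifying $\Psi(\mathbf d)\le F(N)$ in each of the three regimes together with the smoothing step that reduces to the extremal shapes.
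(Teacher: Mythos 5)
Your identity $S(G)=\sum_j d_j b_j$ with $\sum_j b_j=N$ and $0\le b_j\le\min(j-1,d_j)$ is correct, so $\Psi(\mathbf d)$ is a genuine upper bound for every graph with degree sequence $\mathbf d$. But the two claims you build on top of it both fail, and the second failure is fatal. First, the constructions do \emph{not} saturate their caps: in construction (ii) the low-degree vertex $v$ is last in the ordering, so \emph{all} of its edges are back-edges and $b_{n+1}=2m$ is forced, while the first $n$ vertices induce $K_n$ minus a matching and contribute only $\binom{n}{2}-m$ back-edges against a cap total of $\binom{n}{2}$; the greedy LP optimum instead takes $b_j=j-1$ for $j\le n$ and $b_{n+1}=m$, which is strictly larger whenever $m<\frac{n-1}{2}$. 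Second, the statement you reduce the theorem to --- that $\Psi(\mathbf d)\le F(N)$ for every \emph{graphical} degree sequence $\mathbf d$ --- is false, so no smoothing or Erd\H{o}s--Gallai analysis can complete the argument. Concretely, take $n=9$, $m=1$, $N=37$ and $\mathbf d=(8,8,8,8,8,8,8,8,8,2)$, which is graphical: take $K_9$ minus one edge and join a new vertex to the two endpoints of the missing edge. The caps are $(0,1,\dots,8,2)$, and since any feasible point has value $8(37-b_{10})+2b_{10}$ with $b_{10}\ge 1$,
\[\Psi(\mathbf d)=8\cdot 36+2\cdot 1=290,\]
whereas Theorem \ref{main} (case i) gives $F(37)=8\binom{9}{2}+1=289$. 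The same overshoot occurs for the degree sequence of construction (ii) itself whenever $m<\frac{n-1}{2}$: one computes $\Psi(\mathbf d)-F(N)=\binom{m+1}{2}$ in regime (i) and $m(n-1-2m)$ in regime (ii). The reason is structural: your LP forgets that every forward edge out of $v_j$ must reappear as a back-edge of a later vertex --- in particular the last vertex must have $b_k=d_k$ exactly --- so feasibility of $(b_j)$ is a constraint on the graph, not on $\mathbf d$, and graphicality of $\mathbf d$ does not imply it. Repairing this means re-imposing realizability of the $b$-vector, which is essentially the original combinatorial problem.

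For contrast, the paper never uses this relaxation for Theorem \ref{main}. Its proof is structural and inductive: an edge-redistribution argument (Lemmas \ref{univ or n+1} and \ref{n+1}) shows the maximum is attained on a graph with exactly $n+1$ vertices; the analysis then splits on whether that graph has a universal vertex, giving the recursion $F(N)=\max(F(N-n)+3N-2n,\,C(N))$ (Lemma \ref{inductive-max}) together with a separate recursion for the non-universal maximum $C(N)$ proved by a Havel--Hakimi-type swap (Lemmas \ref{Havel-Hakimi Lemma} and \ref{Nonuniversal Recurence}), and induction on $N$ closes all cases. The summation-by-parts device you propose is the one the paper reserves for forests and planar graphs (Theorems \ref{main3} and \ref{main4}), where the caps are placed on the \emph{partial sums} $\sum_{j\le i}a_j$ and are justified by subgraph sparsity ($i-1$ and $3i-6$ edges respectively); in the dense setting of Theorem \ref{main} no cap depending only on the degree sequence is sharp enough, which is precisely what the counterexample above exhibits.
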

\begin{figure}[h]
\begin{subfigure}[b]{.3\linewidth}
\begin{center}
  \includegraphics[width=.73\linewidth]{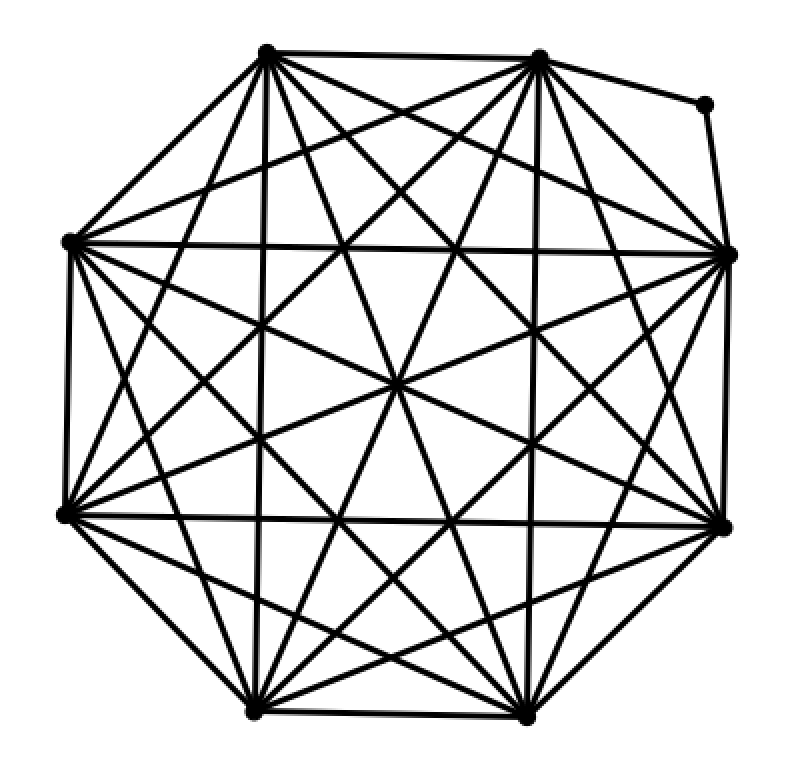}
  \caption{$N=30$ edges}
\end{center}
\end{subfigure}
\hfill
\begin{subfigure}[b]{.3\linewidth}
\begin{center}
  \includegraphics[width=.80\linewidth]{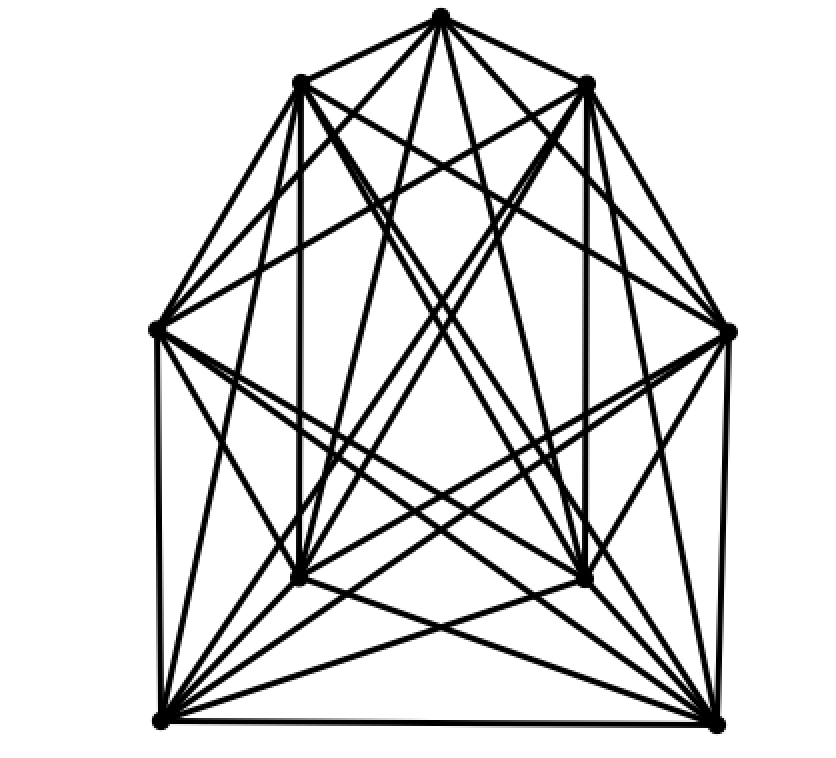}
  \caption{$N=31$ edges}
\end{center}
\end{subfigure}
\hfill
\begin{subfigure}[b]{.3\linewidth}
\begin{center}
  \includegraphics[width=.73\linewidth]{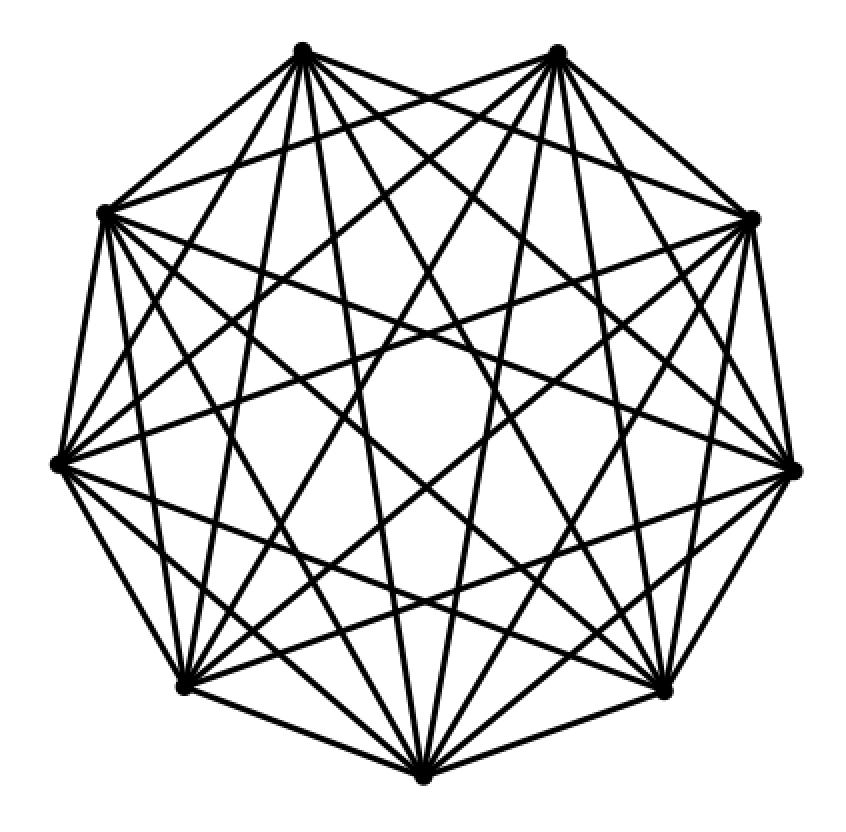}
  \caption{$N=32$ edges}
 \end{center}
\end{subfigure}
\caption{Maximal graphs in Theorem \ref{main}}
\label{fig:arckdef1}
\end{figure}
There are two key aspects to the claimed maximal graph $G$. First, in each case $G$ has $n + 1$ vertices. This is no coincidence, and is a key structural result in the course of proving Theorem \ref{main}. Secondly, these maximal graphs contain a ``universal'' vertex connected to all other vertices in both the first and third cases, but not in the second case. The analysis in the following sections is therefore often separated based on whether or not the graph contains such a ``universal'' vertex. As it turns out, in the case where the graph has no universal vertex and has $n + 1$ vertices, we will show the construction in (ii) is optimal for all $1\le m\le\frac{n - 1}{2}$.

Before proceeding with the bulk of the proof we need a series of definitions.
\begin{definition}
In a graph $G$, define a vertex $v$ to be \textbf{universal} if $v$ connects to all other vertices in the graph $G$. Furthermore, let the set of graphs $G$ with a universal vertex be $\mathcal{UG}$.
\end{definition}
\begin{definition}
For an edge $uv\in E(G)$, define its \textbf{weight} to be $\min(\deg u,\deg v)$.
\end{definition}
\begin{definition}
Let
\[C(N) = \max_{\substack{G\text{ has $N$ edges}\\G\text{ has $n+1$ vertices} \\ G\notin\mathcal{UG}}} (S(G)).\]

We leave $C(N)$ undefined if no such graph $G$ exists.
\end{definition}
For convenience we consider $N\le 2$ separately. Note that $F(1) = 2, F(2) = 2$ as there is only one possible value in both cases. Since $1 = \binom{1}{2} + 1$ and $2 = \binom{2}{2} + 1$, these both agree with the claimed formula $n\binom{n}{2} + m(2m - n)$ in Theorem \ref{main}. 

We now note that $C(N)$ is only defined if $1\le m\le\frac{n - 1}{2}$.
\begin{lemma}\label{well-defined-m}
If $m > \frac{n - 1}{2}$ then every graph with $n + 1$ vertices and $N$ edges is in $\mathcal{UG}$.
\end{lemma}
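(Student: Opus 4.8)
The plan is to pass to the complement graph and reduce the claim to a one-line edge count. Let $G$ be any graph with $n+1$ vertices and $N = \binom{n}{2} + m$ edges, and let $\bar{G}$ be its complement on the same vertex set. Since the complete graph on $n+1$ vertices has $\binom{n+1}{2} = \binom{n}{2} + n$ edges, the complement $\bar{G}$ has exactly $\binom{n+1}{2} - N = n - m$ edges. The crucial reformulation is that a vertex $v$ is universal in $G$ precisely when it is isolated in $\bar{G}$, i.e. incident to no edge of $\bar{G}$. Hence it suffices to prove that $\bar{G}$ always has an isolated vertex.

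To establish this, I would bound the number of non-isolated vertices of $\bar{G}$. Each of its $n-m$ edges is incident to at most two vertices, so at most $2(n-m)$ vertices of $\bar{G}$ can be non-isolated. The hypothesis $m > \frac{n-1}{2}$ rearranges directly to $2(n-m) < n+1$, so strictly fewer than $n+1$ vertices of $\bar{G}$ are non-isolated. Since $\bar{G}$ has $n+1$ vertices in total, at least one of them is isolated; the corresponding vertex is universal in $G$, and therefore $G \in \mathcal{UG}$.

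There is essentially no obstacle here: the entire argument rests on the elementary observation that $2(n-m) < n+1$ is exactly equivalent to the stated hypothesis $m > \frac{n-1}{2}$, which also clarifies why this is the natural threshold. At the boundary value $m = \frac{n-1}{2}$ (with $n$ odd) the count becomes $2(n-m) = n+1$, and the complement can then be a perfect matching covering all $n+1$ vertices, leaving no isolated vertex and hence no universal vertex in $G$. This is consistent with case (ii) of Theorem~\ref{main}, whose extremal graph has no universal vertex.
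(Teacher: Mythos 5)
Your proof is correct and is essentially the paper's own argument: the paper also observes that $G$ is missing only $n-m$ edges from $K_{n+1}$ and that each missing edge can ruin universality for at most two vertices, so $2(n-m) < n+1$ forces a universal vertex. Phrasing this via isolated vertices of the complement $\bar{G}$ is just a restatement of the same counting, so there is nothing to add.
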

\begin{proof}
We have $n + 1$ vertices but $\binom{n}{2} + m = \binom{n+1}{2} - (n - m)$ edges. Since $2(n - m) < n+1$ and each edge can make at most $2$ vertices non-universal, there must be an universal vertex.
\end{proof}
Furthermore note that $F(N)$ is monotonically increasing as $N$ increases.
\begin{lemma}\label{monotonicity}
$F(N)$ is a strictly increasing function with respect to $N$.
\end{lemma}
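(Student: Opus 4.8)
The plan is to prove $F(N) < F(N+1)$ for every $N \ge 1$ by the natural extremal-modification argument: take a graph attaining $F(N)$ and produce from it a graph on $N+1$ edges whose specialty is strictly larger. Concretely, I would let $G$ be a graph with $N$ edges satisfying $S(G) = F(N)$. Since $N \ge 1$, the graph $G$ has at least one edge and hence a vertex $u$ with $\deg_G u \ge 1$. I would then form $G'$ by adjoining a brand-new vertex $w$ together with the single edge $uw$, so that $G'$ has exactly $N+1$ edges. Using a fresh pendant vertex has the advantage that such an edge can always be added, so no case analysis on the structure of $G$ (e.g.\ whether $G$ is complete) is needed.

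The key step is the observation that passing from $G$ to $G'$ never decreases the weight of any edge already present. Indeed, the only degree that changes is $\deg u$, which increases by exactly $1$; since $\min(\cdot,\cdot)$ is nondecreasing in each of its arguments, every edge of $G$ incident to $u$ retains a weight in $G'$ at least as large as in $G$, while every edge not incident to $u$ is unaffected. The new edge $uw$ contributes weight $\min(\deg_{G'} u, \deg_{G'} w) = \min(\deg_G u + 1,\, 1) = 1$. Summing the contributions then gives $S(G') \ge S(G) + 1$. Chaining this with the definition of $F$ yields $F(N+1) \ge S(G') \ge S(G) + 1 = F(N) + 1 > F(N)$, which is exactly the desired strict monotonicity.

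I expect no genuine obstacle in this argument; it reduces entirely to the monotonicity of the minimum function under increasing a single degree, combined with the trivial fact that any graph with at least one edge admits a new pendant edge of weight $1$. The only point demanding a moment of care is verifying that attaching the pendant cannot lower an existing weight, but this is immediate from the monotonicity of $\min$. (One could alternatively fill in a missing edge between two non-adjacent existing vertices when $G$ is not complete, which would often give a larger jump, but since only a \emph{strict} increase is required, the uniform pendant construction is the cleanest route.)
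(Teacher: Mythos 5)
Your proposal is correct and is essentially the paper's own argument: the paper likewise takes an extremal graph $G$ with $S(G)=F(N)$, attaches a new pendant vertex to an arbitrary vertex of $G$, and concludes $F(N)=S(G)<S(G')\le F(N+1)$. Your write-up merely fills in the (easy) justification that existing edge weights cannot decrease and the new edge has weight $1$, which the paper leaves implicit.
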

\begin{proof}
Consider graph $G$ with $S(G) = F(N)$. Make $G'$ be $G$ with additional vertex $v$ connected to an arbitrary vertex in $G$. Then $F(N) = S(G) < S(G')\le F(N + 1)$.
\end{proof}
The next observation was the key observation necessary for the original problem given to students on the Team Selection Test. 
\begin{lemma}\label{univ or n+1}
The maximum $F(N)$ is attained either on a graph with $n + 1$ vertices or a graph with a universal vertex.
\end{lemma}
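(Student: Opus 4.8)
The plan is to start from an arbitrary maximizer and apply a sequence of transformations, each preserving the number of edges and not decreasing $S$, until reaching a maximizer that either contains a universal vertex or has exactly $n+1$ vertices. The first observation is that any graph with $N=\binom{n}{2}+m$ edges has at least $n+1$ vertices, since $N>\binom{n}{2}$; thus ``$n+1$ vertices'' is the minimum possible, and the transformations will only ever lower the vertex count. It therefore suffices to show that a maximizer $G$ with no universal vertex and at least $n+2$ vertices can always be transformed, without decreasing $S$, into a maximizer that either has a universal vertex or has strictly fewer vertices. Iterating, and using that the vertex count cannot drop below $n+1$, the process terminates at a graph of the desired form, so I would phrase the lemma as the output of this reduction.

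First I would dispose of the easy reductions: isolated vertices may be deleted without changing $S$, so assume $G$ has minimum degree at least $1$. Next fix a vertex $w$ of maximum degree; since $G\notin\mathcal{UG}$ it has a non-neighbor $x$, and (having no isolated vertices) $x$ has a neighbor $y\neq w$. The core move is an edge relocation that increases $\deg w$: delete the edge $xy$ and insert the edge $xw$. This keeps the edge count fixed, leaves $\deg x$ unchanged, raises $\deg w$ by one and lowers $\deg y$ by one. Writing out the change in $S$, the removed and added edges together contribute $\min(\deg x,\deg w+1)-\min(\deg x,\deg y)\ge 0$ since $\deg w\ge \deg y$, and raising $\deg w$ does not decrease the weights of the other edges at $w$ because $w$ has maximum degree. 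Each move strictly increases $\deg w$, so after finitely many moves $w$ becomes universal, yielding a maximizer with a universal vertex, unless the process first strips the last edge off some non-neighbor of $w$, exposing an isolated vertex which we then delete to reduce the vertex count.

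The hard part will be the one contribution I have suppressed: lowering $\deg y$ can decrease the weights of the remaining edges at $y$, namely those joining $y$ to a vertex of degree at least $\deg y$, and this loss must be shown not to outweigh the gains above. I expect this to be the main obstacle. I would address it by a careful canonical choice of the relocated edge rather than an arbitrary one, guided by the degree constraints available in this regime: since $G$ has at least $n+2$ vertices and $2N\le n(n+1)$, the minimum degree is at most $n-1$ while the maximum degree is at most the number of vertices minus $2$, which bounds the possible loss at $y$ against the gain at $w$. Concretely I would try to select $y$ so that its remaining neighbors all have degree strictly less than $\deg y$ (killing the loss term outright), and otherwise fall back on an amortized accounting over a maximal run of relocations; I would deliberately avoid claiming a full nested-neighborhood (threshold) compression, since the extremal graph in case (ii) is not a threshold graph and such a compression would in fact decrease $S$. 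Once the non-decrease of $S$ under a suitably chosen relocation is established, the termination argument above, together with Lemma~\ref{well-defined-m} and the monotonicity of Lemma~\ref{monotonicity}, completes the reduction to the two advertised cases.
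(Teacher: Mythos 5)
Your overall reduction strategy (iterate $S$-non-decreasing, edge-count-preserving moves until a universal vertex appears or the vertex count drops to $n+1$) is the same general plan as the paper's, but the specific move you chose does not work, and the gap sits exactly where you flagged it: the loss at $y$. The relocation ``delete $xy$, add $xw$'' (with $w$ of maximum degree) lowers $\deg y$ by one, and every remaining edge $yz$ with $\deg z \ge \deg y$ loses one unit of weight. This loss is real and can dominate the gain. Take $G$ to be two disjoint copies of $K_4$: then $N = 12 = \binom{5}{2}+2$, so $n=5$, and $G$ has $8 \ge n+2$ vertices and no universal vertex, i.e.\ it is exactly the kind of graph your iteration must handle. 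Every relocation of your kind decreases $S$ by $2$, no matter how $x$ and $y$ are chosen: by symmetry $x$ and $y$ lie in the clique not containing $w$, the swapped pair of edges nets zero ($\min(3,4)-\min(3,3)=0$), the edges at $w$ gain nothing, and the two remaining edges at $y$ each lose one. The same example defeats your proposed repair, since in a regular graph no vertex $y$ has all remaining neighbors of degree strictly less than $\deg y$; and the fallback ``amortized accounting over a maximal run'' is not carried out and is far from clear, since intermediate graphs may shed specialty that is never recovered. So the central claim of your argument --- non-decrease of $S$ under a suitably chosen relocation --- is missing, and as stated it is false.

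The paper sidesteps this obstacle by orienting the move around the \emph{minimum}-degree vertex rather than the maximum-degree one. Let $v$ have minimum degree $\ell \le n'-2$ with neighbors $v_1, \ldots, v_\ell$. Since there is no universal vertex, each $v_i$ has a non-neighbor $w_i$; replace each edge $vv_i$ by $v_iw_i$ and delete $v$, resolving any double edges by relocating one copy to a missing edge (which exists since $N \le \binom{n'-1}{2}$). The point is that after this move every remaining vertex's degree weakly increases ($v_i$ trades the edge to $v$ for the edge to $w_i$, while $w_i$ gains an edge), so no edge weight can drop, and each deleted edge $vv_i$, whose weight was exactly $\ell$, is replaced by an edge of weight at least $\ell$. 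There is no loss term to control at all, the vertex count strictly decreases each iteration, and termination at $n+1$ vertices or at a universal vertex follows. If you want to salvage your version, you need precisely such a device that prevents any degree from decreasing; moving single edges one at a time toward a fixed max-degree vertex cannot achieve that.
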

\begin{proof}
Consider a graph $G$ with $n'\ge n + 2$ vertices and no universal vertex. Let $v$ be the vertex with minimal degree $\ell\le n' - 2$ and suppose the neighbors of $v$ are $v_1, \ldots, v_{\ell}$. Since there is no universal vertex in $G$, each of $v_1, \ldots, v_{\ell}$ has a vertex $w_1, \ldots, w_{\ell}$ with $w_iv_i$ not being an edge for each $1\le i\le \ell$. 

Now delete all edges $vv_i$ in $G$ and replace these edges with $v_iw_i$ and delete the vertex $v$. Call this new multigraph $G'$. Note that $G'$ has $n' - 1$ vertices and that multiple edges may arise in $G'$ if and only if $v_i = w_j$ and $w_j = v_i$. Construct $G''$ by taking any pair of double edges, deleting one of them, and adding any missing edge of $G'$ in its place. This is always possible since $N = \binom{n}{2} + m\le \binom{n+1}{2}\le \binom{n'-1}{2}$.

Note that $G''$ has $n' - 1$ vertices and $S(G'')\ge S(G)$. The second observation follows as every vertex in $G''$ has degree at least as large as in $G$, while the $\ell$ edges deleted from $G$ have been replaced with $\ell$ new edges with increased or the same weights. Iterating this procedure, we eventually terminate since the vertex count decreases every time. Furthermore, we terminate at a graph that either has a universal vertex or has $n + 1$ vertices, with at least as large specialty as before, which implies the result.
\end{proof}
Surprisingly, one can leverage this observation to reduce the search of graphs which maximize $F(N)$ to only those on $n+1$ vertices.
\begin{lemma}\label{n+1}
The maximum $F(N)$ is attained on a graph with $n + 1$ vertices.
\end{lemma}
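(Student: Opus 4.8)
The plan is to combine Lemma \ref{univ or n+1} with a single clean identity for graphs possessing a universal vertex, and then exploit the monotonicity of $F$ from Lemma \ref{monotonicity}. By Lemma \ref{univ or n+1} it suffices to treat a maximizer $G$ with $S(G) = F(N)$ that has a universal vertex $u$ and $n' \ge n+2$ vertices (if it already has $n+1$ vertices we are done). I would first record the effect of deleting $u$. Writing $H = G - u$, a graph on $k := n' - 1$ vertices with $N - k$ edges, every vertex other than $u$ has its degree drop by exactly $1$ upon deletion of $u$, and every edge of $G$ incident to $u$ has weight equal to the (smaller) degree of its other endpoint. Splitting the sum defining $S(G)$ into edges at $u$ and edges inside $H$ then yields the identity
\[ S(G) = 3N - 2k + S(H), \qquad k = n'-1. \]

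Next I would convert this into a statement about $F$. Since $H$ is a graph with $N - k$ edges, $S(H) \le F(N-k)$, so $S(G) \le \phi(k)$ where $\phi(k) := 3N - 2k + F(N-k)$. The heart of the argument is that $\phi$ is strictly decreasing for $k \ge n$: indeed
\[ \phi(k) - \phi(k+1) = 2 - \bigl(F(N-k-1) - F(N-k)\bigr) \ge 2, \]
because $F$ is increasing by Lemma \ref{monotonicity} and $N - k - 1 < N - k$, so the parenthesized difference is nonpositive. Since $G$ has $n' \ge n+2$ vertices we have $k = n' - 1 \ge n+1 > n$, whence $S(G) \le \phi(k) < \phi(n)$.

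It then remains to exhibit an actual graph on $n+1$ vertices whose specialty equals $\phi(n) = 3N - 2n + F(N-n)$, which I would do by strong induction on $N$ (the cases $N \le 2$ being handled already). By the inductive form of the present lemma applied to $N - n < N$, the value $F(N-n)$ is attained on a graph $H_0$ on at most $n$ vertices; padding with isolated vertices to exactly $n$ vertices leaves $S(H_0) = F(N-n)$ unchanged, and attaching a new universal vertex produces a graph $G_0$ on $n+1$ vertices with $N$ edges. The identity above gives $S(G_0) = 3N - 2n + S(H_0) = \phi(n)$. Combining, $F(N) = S(G) < \phi(n) = S(G_0) \le F(N)$, a contradiction, so in fact $n' = n+1$ and the maximum is attained on $n+1$ vertices.

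The main obstacle I anticipate is this last step: verifying that the optimal $(N-n)$-edge graph can be realized on at most $n$ vertices, so that it fits alongside the universal vertex within the required $n+1$ vertices. This is exactly where the induction hypothesis is invoked, and it hinges on checking that if $N - n = \binom{n_e}{2} + m_e$ with $1 \le m_e \le n_e$ then $n_e + 1 \le n$; one should verify this bound (including the boundary case $m = n$, where $N - n = \binom{n}{2}$) to ensure the padding argument is valid. The identity $S(G) = 3N - 2k + S(H)$ is the crucial structural input, and getting its constant terms right—in particular the coefficient $-2$ on $k$, which is precisely what forces $\phi$ to decrease once it is paired with the monotonicity of $F$—is what the whole reduction depends on.
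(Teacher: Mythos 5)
Your proposal is correct and takes essentially the same route as the paper's proof: both delete/attach a universal vertex to obtain the identity $S(G) = S(H) + 3N - 2k$, combine it with the strict monotonicity of $F$ (Lemma \ref{monotonicity}) to rule out $n' \ge n+2$, and use induction on $N$ to realize $F(N-n)$ on at most $n$ vertices (padding with an isolated vertex in the boundary case $m=1$) before re-attaching a universal vertex to reach a contradiction. The only difference is cosmetic: you package the paper's inequality chain as the strict decrease of $\phi(k) = 3N - 2k + F(N-k)$, and the verification you flag as remaining ($n_e + 1 \le n$, via $N - n = \binom{n-1}{2} + (m-1)$) indeed goes through exactly as the paper carries it out.
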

\begin{proof}
We induct on $N$. The cases when $N = 1$ or $N=2$ are trivial so let $N\ge 3$ for the remainder of the proof. Suppose that the result holds for all smaller $N$ and set $N = \binom{n}{2} + m, 1\le m\le n$. Note that $n\ge 2$ as $N\ge 3$. Now suppose for the sake of contradiction that the maximum is not attained on a graph with $n + 1$ vertices. Therefore by Lemma \ref{univ or n+1}, we know that there exists a graph $G$ with a universal vertex satisfying $S(G) = F(N)$ but no such graph with $n + 1$ vertices. Therefore $G$ has $n'\ge n + 2$ vertices and $G$ has a universal vertex $v$. Label the neighbors of $v$ as $v_1, \ldots, v_{n'-1}$. Furthermore, let vertex $v_i$ have degree $d_i$.

Consider deleting $v$ from $G$. The remaining graph, $G'$, has $N - n' + 1$ edges and the remaining vertices have degree $1$ less in $G'$ than in $G$. Therefore, each of the remaining $N - n' + 1$ edge weights decrease by $1$ when going from $S(G)$ to $S(G')$. Furthermore, the $n' - 1$ edges $vv_i$ have weight $d_i$ in $G$. Therefore the total loss from removing these edges is
\[\sum_{i = 1}^{n'-1} d_i = 2N - n' + 1.\]
Thus
\begin{align*}
S(G) &= (2N - n' + 1) + (N - n' + 1) + S(G')\\
&\le 3N - 2n' + 2 + F(N - n' + 1)\\
&< 3N - 2n + F(N - n),
\end{align*}
where we have used $n'\ge n + 2$ in the final inequality. 

However, consider $G''$ with $N - n = \binom{n - 1}{2} + (m - 1)$ edges that has $S(G'')=F(N-n)$. If $2\le m\le n$ then $1\le m - 1\le n - 1$ so by the inductive hypothesis $G''$ can be taken to have $n$ vertices. If $m = 1$, then $N - n = {n - 2\choose 2} + (n - 1) = \binom{n}{2}$ and by the inductive hypothesis $F(N)$ is maximized on a graph with $n - 1$ vertices. In this case, add an empty vertex to obtain $G''$. Let the vertices of $G''$ be $v_1,\ldots,v_n$ and let $v_i$ have degree $d_i$.
Now add a universal vertex $v$ to $G''$ to form graph $G^{\circ}$ with $(N - n) + n = N$ edges. The  weights of all edges in $G''$ increase by $1$ and inserted edges $vv_i$ have weight $d_i + 1$. Therefore
\begin{align*}
S(G^{\circ}) &= S(G'') + (N - n) + n + 2(N - n)\\
&= S(G'') + 3N - 2n\\
&= F(N - n) + 3N - 2n,
\end{align*}
and we have constructed a graph on $N$ edges with $S(G^{\circ}) > S(G)=F(N)$, a contradiction! Thus the inductive step is complete and the result follows.
\end{proof}
With this structural result one can already deduce that the specialty of graphs with a triangular number of edges is maximized with a complete graph.
\begin{corollary}\label{m=n}
If $m = n$, then $F(N) = n{n + 1\choose 2}$.
\end{corollary}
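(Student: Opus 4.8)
The plan is to deduce this almost immediately from the structural result in Lemma \ref{n+1}, exploiting the rigidity of a complete graph. First I would unwind the hypothesis $m = n$ in the canonical representation $N = \binom{n}{2} + m$: substituting gives
\[
N = \binom{n}{2} + n = \frac{n(n-1)}{2} + n = \frac{n(n+1)}{2} = \binom{n+1}{2},
\]
so the case $m = n$ is exactly the case where the edge count is a triangular number and we expect the optimizer to be a clique.

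Next I would invoke Lemma \ref{n+1}, which guarantees that the maximum $F(N)$ is attained on some graph $G$ with exactly $n+1$ vertices. The key observation is then purely extremal: a simple graph on $n+1$ vertices has at most $\binom{n+1}{2}$ edges, and since our $N$ equals $\binom{n+1}{2}$ precisely, the graph $G$ must use every possible edge. Hence $G = K_{n+1}$ is forced, and there is no freedom left to optimize over.

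Finally I would compute $S(K_{n+1})$ directly. In $K_{n+1}$ every vertex has degree $n$, so each edge $uv$ has weight $\min(\deg u, \deg v) = n$; summing over all $\binom{n+1}{2}$ edges yields
\[
S(K_{n+1}) = n\binom{n+1}{2},
\]
which matches the claimed value and also agrees with the formula $n\binom{n}{2} + m(2m-n)$ of Theorem \ref{main}(iii) upon setting $m = n$. I do not anticipate a genuine obstacle here: all the difficulty has already been absorbed into Lemma \ref{n+1}, and what remains is the elementary remark that $\binom{n+1}{2}$ edges on $n+1$ vertices leaves only the complete graph. The only point warranting a word of care is confirming that $m = n$ is the correct boundary of the representation (so that $N$ is genuinely triangular), which the opening computation settles.
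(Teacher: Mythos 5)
Your proposal is correct and follows exactly the paper's own argument: note that $m=n$ makes $N=\binom{n+1}{2}$ triangular, invoke Lemma \ref{n+1} to restrict to graphs on $n+1$ vertices, and conclude that $K_{n+1}$ is forced, whence $S(K_{n+1})=n\binom{n+1}{2}$. The paper's proof is just a terser version of the same reasoning, leaving the final computation of $S(K_{n+1})$ implicit.
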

\begin{proof}
Note that $N = \binom{n}{2} + n = {n + 1\choose 2}$ in this case and that the maximal value $F(N)$ is attained on a graph with $n + 1$ vertices by Lemma \ref{n+1}. Therefore the complete graph $K_{n+1}$ is the only possibility and the result follows.
\end{proof}
Furthermore, we can now derive an inductive relationship between $F(N)$ and $C(N)$.
\begin{lemma}\label{inductive-max}
$F(N) = \max(F(N - n) + 3N - 2n, C(N))$
\end{lemma}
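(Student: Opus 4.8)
The plan is to combine Lemma \ref{n+1} with a dichotomy on whether a maximizing graph contains a universal vertex, since the quantity $C(N)$ was defined precisely to capture the non-universal case on $n+1$ vertices. By Lemma \ref{n+1} there is a graph $G$ on $n+1$ vertices with $S(G) = F(N)$. Either $G \in \mathcal{UG}$ or $G \notin \mathcal{UG}$, and I would bound $F(N)$ above by the appropriate term in each case and then match these with two lower bounds.

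First I would treat the universal case. If $G$ has a universal vertex $v$, then since $G$ has $n+1$ vertices we have $\deg v = n$ and every other vertex has degree at most $n$. Deleting $v$ yields a graph $G'$ on $n$ vertices with $N-n$ edges, and the relevant bookkeeping is identical to the computation already carried out in the proof of Lemma \ref{n+1}: each of the $N-n$ surviving edges loses exactly $1$ from its weight (both endpoints drop one degree), for a total loss of $N-n$, while the $n$ deleted edges $vv_i$ contribute total weight $\sum_i \deg v_i = 2N-n$, using that each $\deg v_i \le n$ so the minimum at such an edge equals $\deg v_i$. Hence $S(G) = S(G') + (N-n) + (2N-n) = S(G') + 3N-2n \le F(N-n) + 3N - 2n$. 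Conversely, the reverse of this deletion is exactly the construction of $G^{\circ}$ from the proof of Lemma \ref{n+1}: attaching a universal vertex to an $(N-n)$-edge maximizer realized on $n$ vertices gives a graph on $n+1$ vertices and $N$ edges with specialty exactly $F(N-n)+3N-2n$, so $F(N) \ge F(N-n)+3N-2n$.

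Next, if $G \notin \mathcal{UG}$ then $G$ has $N$ edges, $n+1$ vertices, and no universal vertex, so by definition $S(G) \le C(N)$; conversely any graph witnessing $C(N)$ has $N$ edges and thus $C(N) \le F(N)$. Combining the two cases gives the upper bound $F(N) \le \max(F(N-n)+3N-2n,\,C(N))$, while the two lower bounds $F(N) \ge F(N-n)+3N-2n$ and $F(N) \ge C(N)$ yield the reverse inequality, proving the equality.

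The one point demanding care is that $C(N)$ may be undefined. By Lemma \ref{well-defined-m} this occurs exactly when $m > \frac{n-1}{2}$, in which case every graph on $n+1$ vertices lies in $\mathcal{UG}$; then the non-universal case never arises, a maximizer is necessarily universal, and the relation simply reads $F(N) = F(N-n)+3N-2n$, consistent with interpreting the maximum as its only defined argument. I would also record that $N-n = \binom{n-1}{2} + (m-1) \ge 1$ for every $N \ge 3$, so $F(N-n)$ is always well-defined here, the cases $N \le 2$ having been dispatched earlier.
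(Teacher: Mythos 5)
Your proposal is correct and follows essentially the same route as the paper: invoke Lemma \ref{n+1} to restrict to $n+1$ vertices, split on whether the maximizer has a universal vertex, compute the specialty change under deletion/insertion of a universal vertex to get $F(N-n)+3N-2n$ in the universal case, and fall back on the definition of $C(N)$ otherwise. Your additional remarks---the explicit lower bound $C(N)\le F(N)$, the treatment of the case where $C(N)$ is undefined (which the paper defers to Corollary \ref{inductive-above-half}), and the well-definedness of $F(N-n)$---are sound refinements of the same argument rather than a different approach.
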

\begin{proof}
By Lemma \ref{n+1} we know $F(N)$ is realized on a graph $G$ with $n + 1$ vertices. If $G$ has no universal vertex then $F(N) = C(N)$. Otherwise $G$ has $n + 1$ vertices and a universal vertex. We show that any such graph has specialty at most $F(N - n) + 3N - 2n$, and furthermore that there is a construction to achieve this bound.
We now prove that $F(N)\le F(N - n) + 3N - 2n$ if the graph $G$ has a universal vertex $v$. Let $v$ have neighbors $v_1, \ldots, v_n$ as $G$ and let vertex $v_i$ have degree $d_i$. The edges $vv_i$ have weight $d_i$, for a total of $2N - n$. If we construct $G'$ by removing $v$ from $G$, every remaining edge has decreased in weight by $1$. Therefore we have
\[S(G') = S(G) - (N - n) - (2N - n) = F(N) - 3N + 2n.\]
But by the definition of $F$, we have $S(G')\le F(N - n)$, since $G'$ has $N - n $ edges. 
Therefore it follows in this case that 
\[F(N)\le F(N - n) + 3N - 2n.\]
This bound can be achieved by taking a graph on $n$ vertices and $N-n$ edges with specialty $F(N-n)$ and adding a vertex that connects to every other vertex. An isolated vertex need be first added in the case $m = 1$. The analysis mimics the previous paragraph, and this is in essence the same as the construction in Lemma \ref{n+1}.

Therefore, since either the optimum $G$ with $n + 1$ vertices has a universal vertex or not, $F(N) = \max(F(N - n) + 3N - 2n, C(N))$ is forced to hold and the result follows.
\end{proof}
\begin{corollary}\label{inductive-above-half} If $m > \frac{n - 1}{2}$ then $F(N) = F(N - n) + 3N - 2n$.
\end{corollary}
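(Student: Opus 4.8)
The plan is to read this off directly from the two immediately preceding results, Lemma~\ref{inductive-max} and Lemma~\ref{well-defined-m}, since the hypothesis $m > \frac{n-1}{2}$ has been engineered precisely to make one of the two competing terms vanish.

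First I would invoke Lemma~\ref{inductive-max}, which gives the clean inductive identity
\[
F(N) = \max\bigl(F(N - n) + 3N - 2n,\ C(N)\bigr).
\]
The whole task then reduces to showing that under the hypothesis the term $C(N)$ simply does not participate in this maximum. Recall that $C(N)$ was defined as the maximum of $S(G)$ ranging over graphs $G$ with exactly $N$ edges, exactly $n+1$ vertices, and $G \notin \mathcal{UG}$, with the explicit convention (stated just before Lemma~\ref{well-defined-m}) that $C(N)$ is left \emph{undefined} when no such $G$ exists.

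Next I would apply Lemma~\ref{well-defined-m}. Its hypothesis is exactly $m > \frac{n-1}{2}$, and its conclusion is that every graph with $n+1$ vertices and $N$ edges necessarily lies in $\mathcal{UG}$, i.e.\ has a universal vertex. Thus there is \emph{no} graph with $n+1$ vertices, $N$ edges, and $G \notin \mathcal{UG}$, which is precisely the situation in which $C(N)$ is undefined. Interpreting the maximum in Lemma~\ref{inductive-max} as being taken over the nonempty set of contributing terms, only $F(N-n) + 3N - 2n$ survives, and we conclude $F(N) = F(N - n) + 3N - 2n$.

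I expect essentially no obstacle here: the corollary is a one-line logical consequence of the two lemmas, and the hypothesis $m > \frac{n-1}{2}$ was chosen to align the boundary in Lemma~\ref{well-defined-m} with the boundary at which $C(N)$ becomes vacuous. The only point requiring a moment of care is the bookkeeping convention — making explicit that ``$C(N)$ undefined'' means the corresponding optimization is over an empty family of graphs and therefore contributes nothing to the maximum in Lemma~\ref{inductive-max}, rather than contributing some spurious value. Once that reading is fixed, the identity is immediate.
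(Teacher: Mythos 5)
Your proposal is correct and follows essentially the same route as the paper: both use Lemma~\ref{well-defined-m} to rule out graphs on $n+1$ vertices without a universal vertex (so that $C(N)$ is vacuous), and then conclude that the argument establishing Lemma~\ref{inductive-max} forces $F(N) = F(N-n) + 3N - 2n$. Your care about the bookkeeping convention (treating the undefined $C(N)$ as a maximum over an empty family, justified by the case structure of the proof of Lemma~\ref{inductive-max}) is exactly the point the paper makes when it says the argument ``goes through without change except $G$ is forced to have a universal vertex.''
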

\begin{proof}
Note that by Lemma \ref{well-defined-m}, the quantity $C(N)$ is not well-defined in this range. The argument of Lemma \ref{inductive-max} goes through without change except $G$ is forced to have a universal vertex in this case.
\end{proof}

The final structural result we use relies on the key idea of the proof of the Havel-Hakimi algorithm \cite{hakimi1962realizability} \cite{havel1955remark}, which controls possible degree sequences of a simple graph. (A degree sequence of a graph is the list of degrees of its vertices in some order.)
\begin{lemma}\label{Havel-Hakimi Lemma}
Consider a graph $G$ with a weakly decreasing degree sequence $(d_1,\ldots,d_k)$ satisfying $d_1=k-2$ and $k\ge 2$. Then there exists a graph $G'$ such that $G'$ has the same degree sequence as $G$, $S(G')\ge S(G)$, and one vertex of degree $d_1$ in $G'$ is connected to all vertices except a vertex of minimal degree.  
\end{lemma}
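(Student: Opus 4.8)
The plan is to start from $G$ and apply a single Havel--Hakimi-style $2$-switch that preserves the degree sequence while forcing the high-degree vertex to miss a vertex of minimal degree, all without decreasing the specialty. First I would fix a vertex $u$ with $\deg u = d_1 = k-2$. Since $u$ has $k-1$ possible neighbors and degree exactly $k-2$, it fails to be adjacent to exactly one vertex, say $w$. If $\deg w = d_k$, then $w$ already has minimal degree and I can simply take $G' = G$. So the substantive case is $\deg w > d_k$, in which case I fix a vertex $z$ of minimal degree $\deg z = d_k$; note that $z \neq w$ (as $\deg z < \deg w$), that $z \neq u$ (since otherwise $k-2 = d_k$ and then $w$ too would have minimal degree, returning to the trivial case), and that $u$ is adjacent to $z$ because $u$ misses only $w$.

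Next I would locate a vertex $x \in V(G) \setminus \{u,w,z\}$ with $w \sim x$ but $z \not\sim x$, and then perform the $2$-switch that deletes edges $uz$ and $wx$ and inserts edges $uw$ and $zx$. One checks directly that all four vertices are distinct, that $uw$ and $zx$ are genuine non-edges of $G$ (the former since $u$ misses $w$, the latter by the choice of $x$), and that the switch leaves every degree unchanged, so $G'$ has degree sequence $(d_1,\ldots,d_k)$. After the switch $u$ is adjacent to $w$ and non-adjacent to $z$ while remaining adjacent to everything else, so $u$ now misses exactly the minimal-degree vertex $z$, which is precisely the desired conclusion; no iteration is needed.

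Existence of the witness $x$ is the first delicate point, handled by a counting argument. Writing $S = V(G) \setminus \{u,w,z\}$ and letting $a \in \{0,1\}$ record whether $w \sim z$, I would count $|N(w) \cap S| = \deg w - a$ (as $w \not\sim u$) and $|N(z) \cap S| = \deg z - 1 - a$ (as $z \sim u$). Their difference is $\deg w - \deg z + 1 \ge 2 > 0$, so $N(w)\cap S \not\subseteq N(z)\cap S$ and a suitable $x$ exists.

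Finally I would verify $S(G') \ge S(G)$. Because all degrees are preserved, every edge outside the switch keeps its weight, so $S(G')-S(G)$ equals the total weight of $\{uw, zx\}$ minus that of $\{uz, wx\}$. Using $\deg u = k-2 \ge \deg w, \deg z$ (so that $\min(\deg u, \cdot)$ just returns the other degree) and $\deg x \ge d_k = \deg z$, this difference simplifies to $\deg w + \min(d_k,\deg x) - d_k - \min(\deg w,\deg x)$, which I would show is nonnegative by splitting into the cases $\deg x \ge \deg w$ (difference $0$) and $d_k \le \deg x < \deg w$ (difference $\deg w - \deg x > 0$). The main obstacle is really the bookkeeping: confirming simultaneously that the switch is valid, that $x$ exists, and that the weight change is nonnegative. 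Each check is short, but each hinges on the extremal degree constraint $d_1 = k-2$, so care is needed to invoke it correctly at every step.
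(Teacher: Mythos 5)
Your proposal is correct and is essentially the paper's own proof: the identical Havel--Hakimi $2$-switch (with $u,w,z,x$ playing the roles of $v_1,v_j,v_k,v_\ell$), the same observation that preserved degrees reduce the specialty change to the four switched edges, and the same sign computation $\deg w - \min(\deg w,\deg x)\ge 0$. The only difference is cosmetic: you make the existence of the swap partner $x$ explicit via a counting argument where the paper simply cites the neighborhood-size comparison, and you handle the trivial case by degree ($\deg w = d_k$) rather than by index ($j=k$).
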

\begin{proof}
Label the vertices of $G$ as $v_1,\ldots,v_k$ with the degree of $v_i$ being $d_i$. Now the neighborhood of $v_1$ is missing a unique vertex $v_j$. If $j = k$ then taking $G = G'$ gives the result. Otherwise $j\neq k$ and note that $v_j$ has a neighborhood at least as large as $v_k$. Since $v_1$ is connected to $v_k$ but not $v_j$, there exists $v_{\ell}$ such that $(v_{\ell},v_j)$ is an edge but $(v_{\ell},v_k)$ is not. Then define $G'$ by adding in $(v_{\ell},v_k)$ and $(v_1,v_j)$ and removing $(v_1,v_k)$ and $(v_{\ell},v_j)$. Note that
$S(G') - S(G) = d_j + d_k - d_k - \min(d_{\ell}, d_j)\ge 0$, and every vertex in $G'$ has the same degree in $G$. The result follows.
\end{proof}
We now give the main technical lemma in this section of the paper. In particular we recursively bound the specialty of all graphs without a universal vertex.
\begin{lemma}\label{Nonuniversal Recurence}
Suppose that $N=\binom{n}{2}+m$ with $m < \frac{n - 1}{2}$. Then it follows that $C(N)\le\frac{(n - 1)(3n - 4)}{2} - m + C(N - (n - 1))$.
\end{lemma}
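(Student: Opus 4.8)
The plan is to take a nonuniversal graph $G$ on $n+1$ vertices with $N$ edges attaining $S(G) = C(N)$, delete a single vertex of degree $n-1$ to obtain a graph $G''$ on $n$ vertices with exactly $N - (n-1) = \binom{n-1}{2} + m$ edges, and show both that $G''$ is again nonuniversal (so that $S(G'') \le C(N-(n-1))$, which is well-defined by Lemma \ref{well-defined-m} since $m \le \frac{n-2}{2}$) and that the drop $S(G) - S(G'')$ is at most $\frac{(n-1)(3n-4)}{2} - m$. First I would record that $G$ must have a vertex of degree exactly $n-1$: passing to the complement $\overline G$, which has $\binom{n+1}{2} - N = n - m$ edges and, by nonuniversality, minimum degree at least $1$, a maximum $G$-degree below $n-1$ would force every $\overline G$-degree to be at least $2$ and hence $2(n+1) \le 2(n-m)$, which is absurd. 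Since nonuniversality caps the degree at $n-1$, the maximum degree is exactly $n-1$, and Lemma \ref{Havel-Hakimi Lemma} applies with $k = n+1$ and $d_1 = k-2$.

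Next, invoking Lemma \ref{Havel-Hakimi Lemma} I may assume without loss of specialty that $G$ contains a vertex $w$ of degree $n-1$ adjacent to every vertex except a minimum-degree vertex $u$; write $d_u$ for the minimum degree and $t$ for the number of neighbors of $u$ whose degree also equals $d_u$. Deleting $w$ produces $G''$, and one checks $G''$ is nonuniversal: a vertex $x \ne u$ would be universal in $G''$ only if it were already universal in $G$, while $u$ would be universal only if $d_u = n-1$, forcing $G$ to be $(n-1)$-regular and hence $m = \frac{n-1}{2}$, contrary to hypothesis. The loss in specialty then splits into the weights of the $n-1$ deleted edges at $w$ (each of weight equal to the far endpoint's degree, which is at most $n-1$, summing to $2N - (n-1) - d_u$) plus the unit drops among surviving edges; carefully tracking that an edge incident to $u$ drops only when its far endpoint also has degree $d_u$ yields
\[ S(G) - S(G'') = 3N - 2(n-1) - 2d_u + t. \]

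It therefore remains to prove $2d_u - t \ge 4m$, since substituting $N = \binom{n}{2}+m$ turns the target into exactly $3N - 2(n-1) - 4m = \frac{(n-1)(3n-4)}{2} - m$. I would argue this entirely in $\overline G$: set $\delta = n - d_u$, the maximum $\overline G$-degree, attained at $u$. The $n - \delta$ vertices that are $\overline G$-nonadjacent to $u$ each have positive $\overline G$-degree realized by edges avoiding $u$, and the $t$ of them of degree $d_u$ contribute $\overline G$-degree $\delta$; a handshake bound on the $n - m - \delta$ edges of $\overline G$ not meeting $u$ gives $2(n-m-\delta) \ge (n-\delta - t) + t\delta$, which rearranges to $\delta \le n - 2m - t(\delta - 1)$. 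Finally $\delta \ge 2$ in this regime, since $\delta \le 1$ together with minimum $\overline G$-degree $1$ would make $\overline G$ a perfect matching and force $m = \frac{n-1}{2}$; hence $t(\delta-1) \ge t/2$ and thus $\delta \le n - 2m - t/2$, i.e. $2d_u - t \ge 4m$, as needed.

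The main obstacle is this last step: the naive bound $d_u \ge 2m$ (equivalently $\delta \le n - 2m$) is not by itself enough once $t > 0$, so the delicate point is the refined handshake estimate that simultaneously controls $\delta$ and $t$, together with excluding the boundary case $\delta \le 1$. The bookkeeping of the specialty loss — in particular that deleting $w$ lowers an edge weight at $u$ only at minimum-degree neighbors — is where the parameter $t$ enters and must be handled precisely. Construction (ii), in which $d_u = 2m$ and $t = 0$ make every inequality above an equality, is a useful sanity check confirming that the estimate is sharp.
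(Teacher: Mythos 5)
Your proposal is correct, and its overall skeleton matches the paper's proof: both establish the existence of a degree-$(n-1)$ vertex, apply Lemma \ref{Havel-Hakimi Lemma} to make it adjacent to everything except a minimum-degree vertex $u$, delete it, check that the resulting graph is nonuniversal so that the bound $C(N-(n-1))$ applies, and compute the specialty loss as exactly $3N-2(n-1)-2d_u+t$, reducing the lemma to the key inequality $2d_u - t \ge 4m$. Where you genuinely diverge is in proving that inequality. The paper argues by contradiction: assuming $t > 2d_u - 4m$, there are at least $2d_u-4m+2$ vertices of degree $d_u$, so the degree sum is bounded above by an expression convex in $d_u$; after separately establishing $2m \le d_u \le n-2$, one checks that both endpoint values of that expression fall short of the true degree sum $n^2-n+2m$. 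You instead work directly in the complement $\overline{G}$, which has $n-m$ edges and minimum degree at least $1$ by nonuniversality: the handshake bound $2(n-m-\delta) \ge (n-\delta-t)+t\delta$ on edges avoiding $u$ (where $\delta = n-d_u$), together with ruling out $\delta\le 1$ (which would force $\overline{G}$ to be a perfect matching and hence $m=\frac{n-1}{2}$), yields $\delta \le n-2m-t(\delta-1)\le n-2m-t$, i.e.\ $d_u \ge 2m+t$, which is slightly stronger than what is needed. Your route is more direct --- no contradiction, no convexity check, and a single counting inequality that is visibly tight for construction (ii) --- whereas the paper's route isolates the standalone fact $d_u\ge 2m$ and then finishes with a convexity observation. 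Both arguments are sound, and your bookkeeping of the loss (in particular the role of $t$, and the verification that $G''$ remains nonuniversal) agrees exactly with the paper's.
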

\begin{proof}
Note that $N-(n-1)=\binom{n-1}{2}+m$ and therefore since $n-2m>1$ it follows that $(n-1)-2m\ge 1$. Thus the right hand side of the claimed inequality is well defined. Now consider $G$ with $n + 1$ vertices, no universal vertex, and such that $S(G)=C(N)$. Furthermore note that there is a vertex of degree $n-1$ as the average degree is $\ge\frac{n(n-1)}{n+1}>n-2$ and there is no universal vertex in $G$.

Now let $G$ have vertices $v_1,\ldots,v_{n+1}$ with $\deg v_i = d_i$, and where $d_i$ is a nondecreasing sequence. By Lemma \ref{Havel-Hakimi Lemma} we can further assume that in $G$, $v_1$ connects to $v_2$ through $v_n$ but not $v_{n+1}$. Furthermore, note that $d_{n+1}$ is not $n-1$ as otherwise $\sum_{i=1}^{n+1}d_i=(n+1)(n-1)=2\left(\binom{n}{2}+m\right)$ which implies $m=\frac{n-1}{2}$, a contradiction.

Suppose that $v_{n+1}$ is connected to $\ell$ vertices of degree $d_{n+1}$. Then we claim that 
\[S(G) = (2N - (n - 1) - d_{n+1}) + (N - (n - 1) - d_{n+1}) + \ell + S(G'),\]
where $G'$ is induced subgraph of $G$ on $v_2,\ldots,v_{n+1}$. This follows as
\begin{align*}
S(G) &= \sum_{(v_i,v_j)\in E(G)}\min(d_i,d_j)\\
&= \sum_{\substack{(v_i,v_j)\in E(G)\\ i,j\neq n+1}}\min(d_i,d_j)+d_{n+1}^2\\
&= \sum_{\substack{(v_i,v_j)\in E(G)\\ i,j\neq 1,n+1}}\min(d_i,d_j)+\sum_{i\neq 1,{n+1}}d_i+d_{n+1}^2\\
&= \sum_{\substack{(v_i,v_j)\in E(G)\\ i,j\neq 1,n+1}}(\min(d_i,d_j)-1) + (d_{n + 1}^2 - \ell) \\
&+ (3N-2d_{n+1}-2(n-1)) + \ell\\
&= S(G')+3N-2d_{n+1}-2(n-1)+\ell.
\end{align*}
The last step follows due to a few facts about the removal of $v_1$. Every edge $(v_i, v_j)$ in $E(G')$ for $i, j\neq 1, n + 1$ is has weight $1$ less than it does in $G$. The edges attached to $v_1$ are all removed. The edges attached to $v_{n + 1}$ are between a degree $d_j$ and $d_{n + 1}$ vertex, which has weight $d_{n + 1}$ in $G$. In $G'$, the degrees are $d_j - 1, d_{n + 1}$. If $d_j > d_{n + 1}$ the weight is still $d_{n + 1}$ in $G'$, but if $d_j = d_{n + 1}$ then the weight has been decremented by $1$. This happens to precisely $\ell$ edges, by definition, hence the claimed equality.

As $d_{n+1}<n-1$ and the remaining vertices have decreased degree by $1$, it follows that $G'$ does not have a universal vertex and we find $S(G')\le C(N-(n-1))$. Therefore it follows that $S(G)\le C(N-(n-1))+3N-2d_{n+1}-2(n-1)+\ell$.

The key claim is now that $\ell - 2d_{n+1}\le -4m$. This yields
\begin{align*}
	C(N) &\le (3N-4m-2(n-1)) + C(N - (n - 1)) \\
	&=\frac{(n - 1)(3n - 4)}{2} - m + C(N - (n - 1)),
\end{align*}
 as desired.

Now suppose that $\ell > 2d_{n+1} - 4m$. Since the sum of all degrees in $G$ is $n^2 - n + 2m$ and then $n^2-n+2m=\sum_{i=1}^{n+1}d_i\le (n)(n-1)+d_{n+1}$, we conclude $d_{n+1}\ge 2m$. Further note from earlier that $d_{n+1}\le n-2$. Therefore it follows that there are at least $\ell + 1 \ge 2d_{n+1} - 4m + 2$ vertices of degree $d_{n+1}$ and thus
\begin{align*}
	n^2-n+2m =\sum_{i=1}^{n+1}d_i &\le d_{n + 1}(2d_{n + 1} - 4m + 2) \\
	&+ (n - 1)(n + 1 - (2d_{n + 1} - 4m + 2)).
\end{align*}
But note that the rightmost expression is a convex function in $d_{n+1}$. Thus, its maximum possible value over $d_{n + 1}\in [2m, n - 2]$ is attained at an endpoint. But its values at $d_{n+1} = 2m$ and $d_{n+1} = n - 2$ both equal $n^2 - n + 2m + (2m - n + 1)$. Since $m < \frac{n - 1}{2}$, this is strictly less than $n^2 - n + 2m$, a contradiction.
\end{proof}

Using this lemma we can now calculate $C(N)$ explicitly.

\begin{lemma}\label{non-univ-max}
For $1\le m\le\frac{n - 1}{2}$,
\[C(N) = (n - 1)\binom{n}{2} + m(4m - n + 1).\]
\end{lemma}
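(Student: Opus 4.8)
The plan is to prove the claimed formula by establishing matching upper and lower bounds on $C(N)$. Throughout, write $f(n,m) = (n-1)\binom{n}{2} + m(4m-n+1)$ for the target value. For the upper bound I would fix $m$ and induct on $n$, unwinding the recurrence of Lemma \ref{Nonuniversal Recurence}. That recurrence requires $m < \frac{n-1}{2}$, i.e.\ $n \ge 2m+2$, so the natural base case is $n = 2m+1$ (equivalently $m = \frac{n-1}{2}$, which forces $n$ odd), precisely where the recurrence no longer applies. For the lower bound I would exhibit the construction from Theorem \ref{main}(ii) and compute its specialty directly.

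For the base case, when $n = 2m+1$ we have $N = \binom{2m+1}{2}+m = 2m(m+1) = \binom{2m+2}{2}-(m+1)$, so a graph counted by $C(N)$ has $2m+2$ vertices and is $K_{2m+2}$ with $m+1$ edges deleted. To avoid a universal vertex, every vertex must be incident to a deleted edge, and $m+1$ edges can cover all $2m+2$ vertices only if they form a perfect matching; hence the graph is the unique (up to isomorphism) $2m$-regular graph obtained from $K_{2m+2}$ by removing a perfect matching. Every edge then has weight $2m$, giving $C(N) = 2m\cdot 2m(m+1) = 4m^2(m+1)$, which one checks equals $f(2m+1,m)$.

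For the inductive step, when $n \ge 2m+2$, Lemma \ref{Nonuniversal Recurence} together with the inductive hypothesis at parameter $(n-1,m)$ (valid since then $m \le \frac{n-2}{2} = \frac{(n-1)-1}{2}$) gives $C(N) \le \frac{(n-1)(3n-4)}{2} - m + f(n-1,m)$, and it remains to verify the routine identity $\frac{(n-1)(3n-4)}{2} - m + f(n-1,m) = f(n,m)$. For the matching lower bound I would analyze the graph $G$ of Theorem \ref{main}(ii): it has $n+1$ vertices, the single vertex has degree $2m < n$, and every other vertex has degree $n-1$ (each missing exactly one other vertex), so $G \notin \mathcal{UG}$. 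Its edges split into the $\binom{n}{2}-m$ edges among the $n$ ``core'' vertices (this core being $K_n$ minus a matching of size $m$), each of weight $n-1$, and the $2m$ edges incident to the single vertex, each of weight $\min(2m,n-1)=2m$; summing gives $S(G) = (n-1)(\binom{n}{2}-m) + 4m^2 = f(n,m)$, so $C(N) \ge f(n,m)$.

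The main obstacle is not the inductive mechanics — the hard inequality $\ell - 2d_{n+1} \le -4m$ was already secured inside Lemma \ref{Nonuniversal Recurence} — but rather correctly isolating and evaluating the base case $n = 2m+1$, where the recurrence breaks down, and confirming that the two independent computations (the telescoped recurrence from above and the explicit construction from below) land on the same closed form. Both reduce to elementary but slightly delicate degree bookkeeping, and the uniqueness argument forcing the perfect-matching structure in the base case is the one genuinely combinatorial point.
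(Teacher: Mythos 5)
Your proposal is correct and follows essentially the same route as the paper: a lower bound from the construction in Theorem \ref{main}(ii), a base case at $m = \frac{n-1}{2}$ where the only admissible graph is $K_{n+1}$ minus a perfect matching, and repeated application of Lemma \ref{Nonuniversal Recurence} (which you phrase as induction on $n$, where the paper telescopes the sum explicitly) to reduce to that base case. The only substantive difference is that you spell out the counting argument forcing the deleted edges to form a perfect matching in the base case, a point the paper asserts without proof.
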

\begin{proof}
The construction given in part (ii) of Theorem \ref{main} is valid in the given range and has no universal vertex for any $1\le m\le\frac{n - 1}{2}$ while achieving the claimed bound.

Now the key point is that when $m = \frac{n-1}{2}$ there is a single isomorphism class of graphs that we are maximizing over for $C(N)$, a $K_{n+1}$ missing a perfect matching. In this case every edge has weight $n-1$ and the result follows.

Otherwise, since $N=\binom{n}{2}+m$, applying Lemma \ref{Nonuniversal Recurence} $n-2m-1$ times inductively yields
\[C(N)\le \sum_{i=2m+2}^{n}\left(\frac{(i-1)(3i-4)}{2}-m\right)+C\left(\binom{2m+1}{2}+m\right)\]
\[= (n-1)\binom{n}{2} + m(4m-n+1)\]
since ${2m + 1\choose 2} + m$ is precisely in the $m = \frac{n - 1}{2}$ case already discussed. The result follows.
\end{proof}
\begin{lemma}\label{final}
We claim that
\begin{itemize}
    \item If $1\le m\le \frac{2n-3}{5}$ then $F(N)\le (n-1)\binom{n}{2}+\frac{m(3m-1)}{2}$.
    \item If $\frac{2n-3}{5}\le m\le \frac{n-1}{2}$ then $F(N)\le (n-1)\binom{n}{2}+m(4m-n+1)$.
    \item If $\frac{n-1}{2}< m\le n$ then $F(N)\le n\binom{n}{2}+m(2m-n)$.
\end{itemize}
\end{lemma}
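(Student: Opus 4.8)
The plan is to prove all three upper bounds simultaneously by strong induction on $N$, feeding on three already-established inputs: the recursion $F(N)=\max(F(N-n)+3N-2n,\,C(N))$ of Lemma \ref{inductive-max} (valid exactly when $C(N)$ is defined, i.e.\ $m\le\frac{n-1}{2}$), the evaluation $C(N)=(n-1)\binom{n}{2}+m(4m-n+1)$ of Lemma \ref{non-univ-max}, and Corollary \ref{inductive-above-half}, which gives $F(N)=F(N-n)+3N-2n$ when $m>\frac{n-1}{2}$. Write $f_1(n,m),f_2(n,m),f_3(n,m)$ for the three claimed bounds in cases i, ii, iii, and note at the outset that $C(N)=f_2(n,m)$, and that since $N-n=\binom{n-1}{2}+(m-1)$, stripping off a universal vertex replaces the pair $(n,m)$ by $(n-1,m-1)$.

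The engine of the induction is a short list of elementary algebraic facts, which I would verify by direct expansion before anything else: (1) the exact recursions $f_1(n,m)=f_1(n-1,m-1)+(3N-2n)$ and $f_3(n,m)=f_3(n-1,m-1)+(3N-2n)$; (2) the near-recursion $f_2(n-1,m-1)+(3N-2n)=f_2(n,m)+(n-4m)$; (3) the comparison $f_1(n,m)-f_2(n,m)=\frac{m(2n-3-5m)}{2}$, whose sign flips precisely at the threshold $m=\frac{2n-3}{5}$ separating cases i and ii; and (4) the agreement $f_2(n',m')=f_3(n',m')$ on the case ii/iii boundary $m'=\frac{n'-1}{2}$, equivalently $f_2(n-1,m-1)=f_3(n-1,m-1)$ when $m=\frac{n}{2}$. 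The first translation to perform is which case $N-n$ occupies: one checks $m'\le\frac{2n'-3}{5}\iff m\le\frac{2n}{5}$ and $m'\le\frac{n'-1}{2}\iff m\le\frac{n}{2}$.

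The inductive step then splits along the three cases. In case i one checks $N-n$ is again in case i (since $m\le\frac{2n-3}{5}<\frac{2n}{5}$), so the inductive hypothesis and fact (1) give $F(N-n)+3N-2n\le f_1(n,m)$, while $C(N)=f_2(n,m)\le f_1(n,m)$ by fact (3) with $m\le\frac{2n-3}{5}$; both arguments of the maximum are thus at most $f_1(n,m)$. In case ii the term $C(N)=f_2(n,m)$ is already correct, and for the other term I would distinguish whether $N-n$ lies in case i (then fact (1) gives $f_1(n,m)$, and $f_1\le f_2$ by fact (3) since now $m\ge\frac{2n-3}{5}$) or in case ii (then fact (2) gives $f_2(n,m)+(n-4m)$, and $m\ge\frac{2n}{5}>\frac n4$ forces $n-4m<0$). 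In case iii, $C(N)$ is undefined and $F(N)=F(N-n)+3N-2n$; for $n\ge 5$ one has $m>\frac{n-1}{2}\ge\frac{2n}{5}$, so $N-n$ lies in case iii (whence fact (1) for $f_3$ closes the step) except at the single value $m=\frac{n}{2}$ with $n$ even, where $N-n$ sits exactly on the case ii/iii boundary and fact (4) lets the case ii hypothesis feed cleanly into the $f_3$ bound.

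The care-intensive part, and where I expect to spend the most effort, is the boundary bookkeeping rather than the core algebra: confirming the case-to-case transitions above (in particular that case iii for $N$ rules out case i for $N-n$ once $n\ge 5$, using $\frac{n-1}{2}\ge\frac{2n}{5}$), handling the degenerate reduction $m=1$ where $N-n=\binom{n-1}{2}$ is triangular so that $F(N-n)=(n-2)\binom{n-1}{2}=f_1(n-1,0)$ is pinned down by Corollary \ref{m=n} rather than by a generic inductive bound, and discharging the finitely many small-$n$ base cases (those with $n\le 4$, hence $N\le 10$) in which the thresholds $\frac{2n-3}{5},\frac{2n}{5},\frac{n-1}{2},\frac{n}{2}$ need not yet be in their eventual order. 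Beyond these, every inequality is an immediate consequence of the recursion together with facts (1)--(4).
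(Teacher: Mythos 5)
Your proposal is correct and takes essentially the same route as the paper's own proof: strong induction on $N$ driven by Lemma \ref{inductive-max}, Lemma \ref{non-univ-max}, and Corollary \ref{inductive-above-half}, with the case split governed by which regime $(n-1,m-1)$ falls into, the degenerate $m=1$ reduction handled via Corollary \ref{m=n}, and the same algebraic identities (your facts (1)--(4)) that the paper verifies inline within its six cases. The only differences are organizational: you name the recursions/comparisons as standalone facts and defer a slightly larger finite set of base cases, whereas the paper checks $N\le 6$ and then runs the case analysis directly for $n\ge 4$.
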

With Lemma \ref{final} and the constructions given in Theorem \ref{main} the main result follows.
\begin{proof}
We prove this by induction on $N$. The result is trivial for $N=1$ and $N=2$. Furthermore $N=4$ and $N=5$ follow from a direct verification and $N = 3, 6$ follow from Lemma \ref{m=n}. Now suppose we have proved the claim for all $N<\binom{n}{2}+m$ and now consider $N=\binom{n}{2}+m$. Note that since $N\ge 7$ it follows that $n\ge 4$ in the remaining analysis. Furthermore note that $N-n=\binom{n-1}{2}+{m-1}$, which is used throughout in the below analysis. We now consider cases based on the relative size of $m$ and $n$.

Case 1: If $m=1$ then by Lemma \ref{inductive-max} and Lemma \ref{non-univ-max} if follows that
\begin{align*}
F(N) &=\max\bigg(F\bigg(\binom{n-1}{2}\bigg)+3N-2n,(n-1)\binom{n}{2}+(5-n)\bigg) \\
&= \max\bigg((n-2)\binom{n-1}{2}+3\binom{n}{2}+3-2n,(n-1)\binom{n}{2}+(5-n)\bigg) \\
&= (n-1)\binom{n}{2}+1
\end{align*}
where we used Lemma \ref{m=n} in the second step and $n\ge 4$ in the final step.

Case 2: If $2\le m\le \frac{2n-3}{5}$ then by Lemma \ref{inductive-max} and Lemma \ref{non-univ-max} we find
\[F(N)=\max\bigg(F\bigg(\binom{n-1}{2}+m-1\bigg)+3N-2m,(n-1)\binom{n}{2}+m(4m-n+1)\bigg).\] Since $1\le m-1\le \frac{2(n-1)-3}{5}$ it follows that
\[F(N-n)+3N-2n=(n-2)\binom{n-1}{2}+\frac{(m-1)(3m-5)}{2}+3N-2n\] 
\[=(n-1)\binom{n}{2}+\frac{m(3m-1)}{2}\]
and note that if $m\le \frac{2n-3}{5}$ then $\frac{m(3m-1)}{2}\ge m(4m-n+1)$. Therefore
\[F(N)=\max\bigg((n-1)\binom{n}{2}+\frac{m(3m-1)}{2},(n-1)\binom{n}{2}+m(4m-n+1)\bigg)
\]
\[=(n-1)\binom{n}{2}+\frac{m(3m-1)}{2},\] as desired.

Case 3: If $\frac{2n-3}{5}<m\le\frac{2n}{5}$ then note that $1\le m-1\le \frac{2(n-1)-3}{5}$. Then
\[F(N-n)+3N-2n=(n-2)\binom{n-1}{2}+\frac{(m-1)(3m-5)}{2}+3N-2n\] 
\[=(n-1)\binom{n}{2}+\frac{m(3m-1)}{2}.\]
Since $m(4m-n+1)\ge \frac{m(3m-1)}{2}$ in this range Lemma \ref{inductive-max} implies 
\[F(N)=\max\bigg((n-1)\binom{n}{2}+\frac{m(3m-1)}{2},(n-1)\binom{n}{2}+m(4m-n+1)\bigg)
\]
\[=(n-1)\binom{n}{2}+m(4m-n+1),\] as desired.

Case 4: If $\frac{2n}{5}<m\le\frac{n-1}{2}$ then note that $\frac{2(n-1)-3}{5}\le m-1\le \frac{(n-1)-1}{2}$. Therefore we compute
\[F(N-n)+3N-2n=(n-2)\binom{n-1}{2}+(m-1)(4m-n-2)+3N-2n\] 
\[=(n-1)\binom{n}{2}+m(4m-n-3)+n.\]
Since $m(4m-n+1)\ge m(4m-n-3)+n$ in this range it follows by Lemma \ref{inductive-max} that 
\[F(N)=\max\bigg((n-1)\binom{n}{2}+m(4m-n-3)+n,(n-1)\binom{n}{2}+m(4m-n+1)\bigg)
\]
\[=(n-1)\binom{n}{2}+m(4m-n+1),\] as desired.

Case 5: If $\frac{n-1}{2}<m< \frac{n+1}{2}$ then it follows that $m=\frac{n}{2}$. Then note that $\frac{2(n-1)-3}{5}\le m-1=\frac{(n-1)-1}{2}$ and $m-1\ge 1$. Therefore using Corollary \ref{inductive-above-half} we obtain
\[F(N)=F(N-n)+3N-2n\]
\[=(n-1)\binom{n}{2}+m(4m-n-3)+n\]
\[=(n)\binom{n}{2}+m(2m-n),\]
where $m=\frac{n}{2}$ is used in the final step.

Case 6: If $\frac{n+1}{2}\le m\le n$ then note that $\frac{(n-1)}{2}\le m-1\le n-1$. Thus
\[F(N)=F(N-n)+3N-2n\]
\[=(n-1)\binom{n-1}{2}+(m-1)(2m-n-1)+3\binom{n}{2}+3m-2n\]
\[=n\binom{n}{2}+m(2m-n),\]
as claimed.

Hence the result follows in all cases by induction.
\end{proof}
\section{Maximum Specialty over Bipartite Graphs}
In this section we compute 
\[F_{B}(N)=\max_{\substack{G\text{ has $N$ edges}\\G\text{ is bipartite}}} (S(G)).\]
In particular we prove the following theorem. 
\begin{theorem}\label{main2}
Suppose that $N=n^2+m$ for $1\le m\le 2n+1$, this decomposition is unique for $N\ge 1$. Then we have two cases based on size of $m$.
\begin{itemize}
    \item If $1\le m\le n$ then $F_B(N)=n^3+m^2$. This is achieved by taking a $K_{n,n}$ and an additional vertex that connects to $m$ vertices on one side of the original bipartition.
    \item If $n+1\le m\le 2n+1$ then $F_B(N)=n^3+n^2+m(m-n)$. This is achieved by taking a $K_{n+1,n+1}$ and removing $2n+1-m$ disjoint edges.
\end{itemize}
\end{theorem}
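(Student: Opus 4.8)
The plan is to verify that the two stated constructions attain the claimed values and then to prove matching upper bounds. The lower bounds are routine degree counts that I would dispatch quickly. In the first construction the $n^2$ edges of the $K_{n,n}$ each have an endpoint of degree $n$ (the untouched side), contributing $n\cdot n^2=n^3$, while the new vertex of degree $m\le n$ contributes $m\cdot m=m^2$, for a total $n^3+m^2$. In the second, the only edges of weight $n+1$ are those joining two of the $m-n$ universal vertices on each side (the removed matching never touches such a vertex), giving $(m-n)^2$ edges of weight $n+1$ and the rest weight $n$; this totals $n(n^2+m)+(m-n)^2=n^3+n^2+m(m-n)$.

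For the upper bound I would use the layer-cake identity
\[
S(G)=\sum_{k\ge 1} e\big(G[V_k]\big),\qquad V_k=\{v:\deg v\ge k\},
\]
which follows from $\min(\deg u,\deg v)=\sum_{k\ge 1}[\deg u\ge k][\deg v\ge k]$. For bipartite $G$ with sides $A,B$, each $G[V_k]$ is bipartite with parts of sizes $a_k=|A\cap V_k|$ and $b_k=|B\cap V_k|$, so $e(G[V_k])\le\min(N,a_kb_k)$. Since $(a_k),(b_k)$ are the conjugates of the two degree sequences, they are non-increasing with $\sum_k a_k=\sum_k b_k=N$, and the bipartite degree bounds force $a_k=0$ for $k>b_1=|B|$ and $b_k=0$ for $k>a_1=|A|$. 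Hence $S(G)\le\max\sum_{k\ge 1}\min(N,a_kb_k)$ over all integer sequences obeying these constraints, and it remains to show this maximum equals the target in each regime.

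To evaluate the optimization I would argue that an optimal pair of sequences may be taken with $a_k=b_k\in\{n,n+1,0\}$: a layer with $a_kb_k\ge N$ is already capped at $N$, and a convexity/efficiency argument (a degree-$h$ vertex contributes value $h^2$ before the cap, with increasing returns up to $h=n$) favours a few balanced, near-$\sqrt N$ heights over many short ones. Restricting to $a_k=b_k=n+1$ for $k\le p$ and $=n$ for $p<k\le p+r$, the edge budget $p(n+1)+rn=N$ turns the objective $pN+rn^2$ into $n(p+r)(n-m)+n^2m+m^2$. For $m\le n$ this is increasing in $p+r$, maximized at $p+r=n$ (so $p=m$, $r=n-m$), giving $n^3+m^2$; for $m>n$ it is decreasing, minimized at $p+r=n+1$ (so $p=m-n$, $r=2n+1-m$), giving $n^3+n^2+m(m-n)$. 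The switch at $m=n$ reflects exactly that a vertex of degree $n+1$ becomes more edge-efficient than one of degree $n$ precisely when $m>n$, since $N/(n+1)>n\iff m>n$.

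The main obstacle is making the optimization step rigorous: one must rule out that asymmetric integer sequences ($a_k\ne b_k$) or heights other than $n,n+1$ do better. The only thing preventing the wasteful bound $\approx N^{3/2}$ is the interplay of integrality with the cross-constraints $a_k\le|A|$, $b_k\le|B|$; note that symmetrizing to the real average $c_k=(a_k+b_k)/2$ (using $a_kb_k\le c_k^2$) relaxes the bound all the way to $N^{3/2}$ and so overshoots, meaning the argument must keep the sequences integral throughout and exploit the cross-constraints directly. As a fallback paralleling Section~2, I would instead reduce to a near-balanced bipartition of bounded size (the analogues of Lemmas \ref{univ or n+1} and \ref{n+1}), normalize via a bipartite Havel--Hakimi/Gale--Ryser rearrangement (the analogue of Lemma \ref{Havel-Hakimi Lemma}), and then induct by deleting a \emph{pair} of universal vertices, one from each side: under this deletion every surviving edge-weight drops by exactly $1$, yielding a clean recursion of the form $F_B(N)=F_B\big(N-|A|-|B|+1\big)+(\text{linear})$ mirroring Lemma \ref{inductive-max}. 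There the obstacle shifts to showing an optimal graph is near-balanced and has a universal vertex on each side, the asymmetry (degrees on $A$ bounded by $|B|$ and vice versa) being the source of difficulty.
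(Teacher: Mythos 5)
Your lower-bound computations are correct, and the layer-cake identity $S(G)=\sum_{k\ge1}e(G[V_k])$ with the bipartite cap $e(G[V_k])\le\min(N,a_kb_k)$ is a genuinely different starting point from the paper's. But there is a real gap exactly where you flag it: the assertion that the integer program $\max\sum_k\min(N,a_kb_k)$, over non-increasing integer sequences with $\sum_k a_k=\sum_k b_k=N$ and the cross-constraints (support of $(a_k)$ at most $b_1$, support of $(b_k)$ at most $a_1$), is attained on symmetric two-height sequences $a_k=b_k\in\{n+1,n,0\}$ is never proved, and it is the entire content of the upper bound. The ``convexity/efficiency'' heuristic does not convert readily into a proof. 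What a rigorous version must handle: since $(a_k)$ and $(b_k)$ are non-increasing, so is $a_kb_k$, hence the capped layers form an initial segment $k\le t$; each capped layer satisfies $a_k+b_k\ge 2n+1$ by integrality (if $a_k+b_k\le 2n$ then $a_kb_k\le n^2<N$), which is precisely where the gap below the trivial $N^{3/2}$ bound is created; and one must then bound the uncapped tail $\sum_{k>t}a_kb_k$ against the two residual budgets $N-\sum_{k\le t}a_k$ and $N-\sum_{k\le t}b_k$, allowing asymmetric layers, heights other than $n$ and $n+1$, and layers sitting just below the cap, all while the cross-constraints couple the support length to $a_1,b_1$. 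Small cases ($N=5,6,7,10$) suggest the relaxed maximum does equal the target, so the route is not doomed, but carrying out this reduction is a case analysis of comparable weight to the paper's entire induction; solving the optimization only after restricting to the two-height symmetric family, as you do, in effect assumes the answer.

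Your fallback is, in essence, the paper's actual proof: the paper shows (Lemma \ref{maximal2}, the bipartite analogue of Lemma \ref{univ or n+1}) that some optimum has a maximal vertex on each side, deletes that pair to obtain the recursions of Lemma \ref{recur}, namely $F_B(N)\le F_B(N-2n-1)+3(N-n)-2$ together with a second bound in the range $1\le m\le n$, and closes by induction on $N$ with case analysis in $m$. Two details of your sketch are unnecessary detours, however: the paper never needs the bipartition to be near-balanced (arbitrary side sizes $k,\ell$ are handled via $k+\ell\ge 2\sqrt{k\ell}\ge 2\sqrt{N}$ together with monotonicity of $F_B$), and no bipartite Havel--Hakimi/Gale--Ryser normalization is used in the bipartite section --- the maximal-vertex reduction is a direct edge-rotation and vertex-deletion argument. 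So as submitted: constructions verified, upper bound incomplete on your main route at its crucial step, and the fallback an accurate pointer to, but not an execution of, the paper's argument.
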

\begin{figure}[h]
\begin{subfigure}[b]{.49\linewidth}
    \begin{center}
  \includegraphics[width=.45\linewidth]{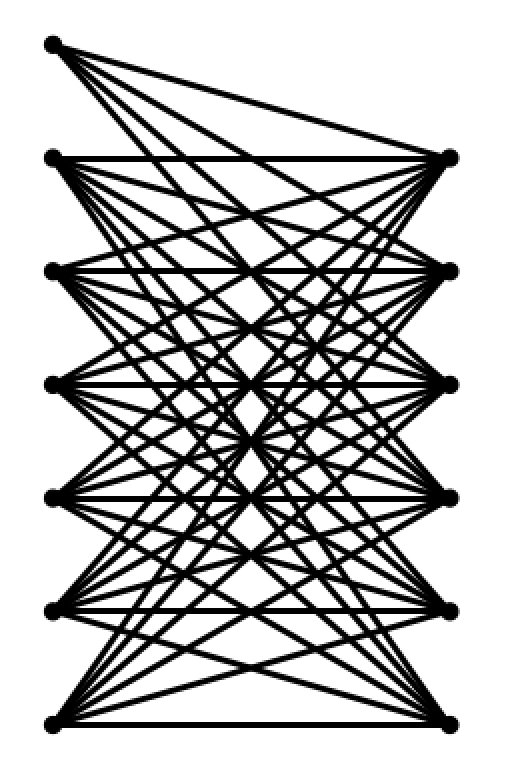}
  \caption{$N=40$ edges}
  \end{center}
\end{subfigure}
\hfill
\begin{subfigure}[b]{.40\linewidth}
\begin{center}
  \includegraphics[width=.5\linewidth]{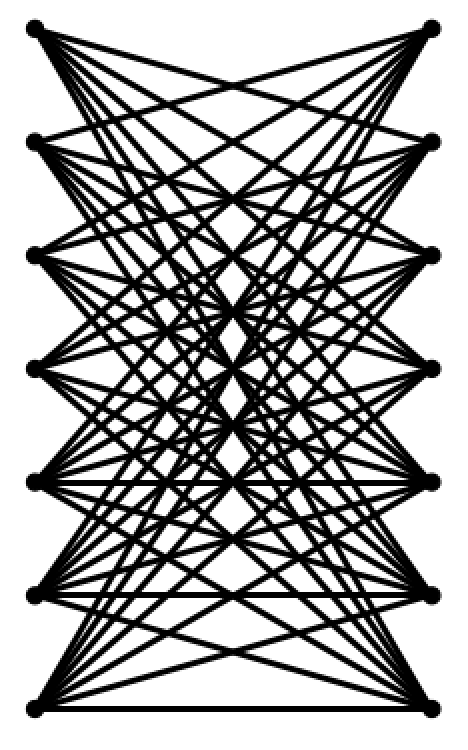}
  \caption{$N=45$ edges}
  \end{center}
\end{subfigure}
\caption{Maximal Graphs in Theorem \ref{main2}}
\label{fig:arckdef2}
\end{figure}

Note that $F_B(N)$ is trivially increasing; the proof is nearly identical to that of Lemma \ref{monotonicity}. This will be used throughout. The key to this section lies in an analog of Lemma $\ref{univ or n+1}$, but in this case the proof gives a stronger conclusion. In a bipartite graph, a \textbf{maximal} vertex of one side of the bipartition is a vertex which connects to all the vertices in the other side.
\begin{lemma}\label{maximal2}
For every integer $N$ there exists a graph $G$ with a bipartition $W$ and $X$ such that each partition has a maximal vertex and $S(G)=F_B(N)$.
\end{lemma}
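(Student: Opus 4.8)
The plan is to prove this by an extremal argument that mirrors Lemma~\ref{univ or n+1}, but which exploits the bipartite structure to obtain the stronger ``maximal vertex on each side'' conclusion. Among all bipartite graphs $G$ with $N$ edges and $S(G)=F_B(N)$, I would fix one with the fewest vertices (first deleting any isolated vertices, and disposing of the degenerate small-$N$ cases by hand), and write its bipartition as $W\sqcup X$. The goal is to show that this minimal optimizer already has a maximal vertex in $W$ and a maximal vertex in $X$; if either side lacks one, I will produce an optimal bipartite graph on strictly fewer vertices, contradicting minimality.

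The key move is a bipartite version of the edge-redistribution in Lemma~\ref{univ or n+1}. Suppose $W$ has no maximal vertex. Let $v\in X$ have minimum degree \emph{within $X$}, with neighbours $v_1,\dots,v_\ell\in W$. Since no vertex of $W$ is maximal, each $v_i$ misses some vertex of $X$, and because $v_i$ is adjacent to $v$ this missing vertex $w_i$ lies in $X\setminus\{v\}$. I form $G'$ by deleting $v$ together with the edges $vv_i$ and inserting the edges $v_iw_i$. Crucially, $G'$ is automatically simple and bipartite: the inserted edges have pairwise distinct $W$-endpoints (the $v_i$ are distinct) and each $v_iw_i$ was a non-edge, so no multi-edges can arise. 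This is exactly where the bipartite setting is cleaner than the general case, which had to repair double edges separately. The graph $G'$ has $N$ edges, the same part $W$, part $X\setminus\{v\}$, and one fewer vertex.

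It then remains to check $S(G')\ge S(G)$. Every surviving vertex has degree at least as large in $G'$ as in $G$: each $v_i$ trades its edge to $v$ for an edge to $w_i$ (degree unchanged), each $w_i$ gains at least one edge, and all other vertices are untouched. Hence every edge of $G$ not incident to $v$ has weight at least as large in $G'$. For the swapped edges, the deleted edge $vv_i$ had weight $\min(\deg_G v,\deg_G v_i)$, while the inserted edge $v_iw_i$ has weight $\min(\deg_G v_i,\deg_{G'} w_i)$; since $v$ was chosen of minimum degree in $X$ and $w_i\in X$, we have $\deg_{G'} w_i\ge \deg_G w_i\ge \deg_G v$, so the inserted weight dominates the deleted one index by index. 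Summing gives $S(G')\ge S(G)=F_B(N)$, so $G'$ is an optimal bipartite graph with fewer vertices, the desired contradiction. The symmetric argument (deleting a minimum-degree vertex of $W$ when $X$ has no maximal vertex) shows $X$ must also contain a maximal vertex, so the minimal optimizer has a maximal vertex on each side. The main point to get right is this weight bookkeeping together with the minimum-degree choice that guarantees the swapped edges lose no weight; the simplicity of $G'$, which was the delicate step in the non-bipartite proof, here comes for free.
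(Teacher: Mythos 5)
Your proof is correct and is essentially the paper's own argument: the paper likewise deletes a minimum-degree vertex from one side and reroutes its edges to non-neighbours within that side (using the assumption that the opposite side has no maximal vertex), with the same weight bookkeeping, the only cosmetic difference being that the paper iterates the operation until it terminates while you phrase it as a vertex-minimal optimizer plus contradiction. Your explicit observation that simplicity comes for free in the bipartite setting is a nice touch, but the substance of the two proofs is identical.
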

\begin{proof}
Suppose that $G$ has $N$ edges with $S(G)=F_B(N)$ but has at least one partition that does not have a vertex connected to all other vertices in the other partition. Let the bipartition of $G$ be $W$ and $X$, and suppose that $W=\{w_1,\ldots,w_k\}$ and $X=\{x_1,\ldots,x_\ell\}$. If $X$ does not have a universal vertex, then consider the vertex in $W$ of minimal degree, and without loss of generality let this be $w_k$. For each vertex $x_i$ adjacent to $w_k$ there exists an alternate vertex $w_{f(i)}$ in $W$ to which $x_i$ does not connect, by assumption. Replace the edges $w_kx_i$ with $w_{f(i)}x_i$, and remove $w_k$. Every remaining vertex has at least the same degree as it did before, and the replaced edges have at least the same weight. If $G'$ is the altered graph, then $S(G')\ge S(G) = F_B(N)$ and $G'$ still has $N$ edges, hence $S(G') = F_B(N)$. We can iterate this process, which decreases the vertex count each time. Thus it terminates, and it must terminate when both sides of the bipartition have a maximal vertex, as desired.
\end{proof}
With Lemma \ref{maximal2} it is now possible to derive two relations regarding $F_B(N)$.
\begin{lemma}\label{recur}
If $N=n^2+m$ with $n+1\le m\le 2n+1$ then 
\[F_B(N)\le F_B(N-2n-1)+3(N-n)-2.\] Otherwise $N=n^2+m$ with $1\le m\le n$ and \[F_B(N)\le \max\bigg(F_B(N-2n-1)+3(N-n)-2,F(N-2n)+2N+n(n-3)\bigg).\]
\end{lemma}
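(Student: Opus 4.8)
The plan is to run a bipartite analogue of the universal-vertex reduction from Lemma~\ref{inductive-max}, deleting the two maximal vertices supplied by Lemma~\ref{maximal2} at once. Take an optimum $G$ with bipartition $W,X$ and with the fewest vertices; applying Lemma~\ref{maximal2} never increases the vertex count, so $G$ already carries maximal vertices $w^\ast\in W$ and $x^\ast\in X$. Write $k=|W|\le|X|=\ell$, so $\deg w^\ast=\ell$, $\deg x^\ast=k$, every $W$-degree is at most $\ell$, and every $X$-degree is at most $k$. Since $n^2<N\le k\ell$ and $N\le(n+1)^2$, the first and decisive step is to show we may take $\ell\le n+1$. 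Granting this, $k\ell\ge N>n^2$ forces $\ell=n+1$, and then $k(n+1)\ge n^2+m$ forces $k=n+1$ when $m\ge n+1$ and $k\in\{n,n+1\}$ when $m\le n$, which is exactly the split in the statement.

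The crux, and the step I expect to be the main obstacle, is the size bound $\ell\le n+1$. I would argue in the spirit of Lemma~\ref{univ or n+1}: if $|X|\ge n+2$, the minimum-degree vertex $x_0$ of $X$ has degree at most $n$, because its degree lies below the average $N/|X|<(n+1)^2/(n+2)<n+1$. I would delete $x_0$ and reroute each incident edge $x_0w$ to a non-neighbour of $w$ inside $X$; for the edges whose $W$-endpoint is already adjacent to all of $X$ (which cannot be rerouted there), I would instead delete them and re-add the same number of edges between surviving vertices, both of whose degrees are at least $\deg x_0$. Every retained, rerouted, or newly created edge then has weight at least $\deg x_0$, so $S$ does not decrease, while the vertex count drops, contradicting minimality. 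The delicate points are precisely the edges running to maximal vertices of $W$ and the guarantee that addable high-degree non-edges exist; I expect to handle the latter by noting that such non-edges are absent only when $G$ is nearly complete bipartite, and that case is independently excluded in the relevant range by the crude estimate $S(G)\le kN$ (each edge $wx$ has weight $\min(\deg w,\deg x)\le\deg x\le k$).

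With the sizes pinned, the remainder is bookkeeping. Delete $w^\ast$ and $x^\ast$ to obtain a bipartite graph $G'$. Since $w^\ast$ and $x^\ast$ are each adjacent to every vertex of the opposite side, every surviving vertex loses exactly one neighbour, so every surviving edge loses exactly one in weight; the number of deleted edges is $k+\ell-1$. The total weight of the deleted edges is
\[ \min(k,\ell)+\sum_{x\in X\setminus\{x^\ast\}}\min(\deg w^\ast,\deg x)+\sum_{w\in W\setminus\{w^\ast\}}\min(\deg w,\deg x^\ast), \]
which I evaluate using the per-side degree bounds. When $(k,\ell)=(n+1,n+1)$ every $\min$ collapses to the relevant degree, the deleted weight equals $2N-(n+1)$, there are $2n+1$ deleted edges, and since $S(G')\le F_B(N-2n-1)$,
\[ S(G)=\bigl(2N-(n+1)\bigr)+\bigl(N-(2n+1)\bigr)+S(G')\le 3(N-n)-2+F_B(N-2n-1). \]
When $(k,\ell)=(n,n+1)$ there are $2n$ deleted edges, the deleted weight is at most $N+n(n-1)$ (the only nontrivial estimate being $\sum_{w\ne w^\ast}\min(\deg w,n)\le n(n-1)$), and since every bipartite graph is a graph we have $S(G')\le F_B(N-2n)\le F(N-2n)$, giving
\[ S(G)\le\bigl(N+n(n-1)\bigr)+\bigl(N-2n\bigr)+F(N-2n)=2N+n(n-3)+F(N-2n). \]

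Finally I assemble according to the pinned sizes. For $n+1\le m\le 2n+1$ only $(n+1,n+1)$ occurs, giving the single bound; for $1\le m\le n$ either size is possible, so $F_B(N)$ is bounded by the maximum of the two displayed quantities. The only genuinely nontrivial ingredient is the size bound $\ell\le n+1$; once it is established, the two reductions above are routine parallels of the universal-vertex computation in Lemma~\ref{inductive-max}.
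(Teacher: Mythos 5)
Your overall skeleton (invoke Lemma~\ref{maximal2}, delete the two maximal vertices, bound $S(G)$ by $S(G')$ plus the losses) is the same as the paper's, and your bookkeeping for the two pinned size patterns $(k,\ell)=(n+1,n+1)$ and $(n,n+1)$ is correct. But the proof has a genuine gap at exactly the step you flag as the crux: the claim that a vertex-minimal optimum has both sides of size at most $n+1$ is never established, and the sketch you give for it does not work as stated. First, in the fallback case where some $w\in W$ is adjacent to all of $X$, deleting $x_0w$ and re-adding an edge elsewhere lowers $\deg w$ by one, so a retained edge $wx$ with $\deg x=k=\ell$ (for instance the edge to the maximal vertex $x^\ast$ when the two sides have equal size) strictly drops in weight; your criterion that ``every retained, rerouted, or newly created edge has weight at least $\deg x_0$'' does not imply $S$ is non-decreasing, because retained edges must not lose \emph{any} weight, not merely stay above $\deg x_0$. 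Second, the estimate you invoke to exclude the nearly-complete-bipartite obstruction, $S(G)\le kN$, is too weak: for $k=n$, $\ell\ge n+2$, and $1\le m<n$ it only gives $S(G)\le n^3+nm$, which is strictly larger than the value $n^3+m^2$ attained by the construction, so it cannot show such graphs are suboptimal and the contradiction with minimality never materializes. (A sharper count such as $S(G)\le\sum_{x\in X}(\deg x)^2$, maximized subject to $\deg x\le n$ and total degree $N$, would close this particular hole, but that is a different argument from the one you wrote.)

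The paper's proof shows that the size-pinning claim you are struggling with is unnecessary. It deletes the two maximal vertices for an \emph{arbitrary} bipartition with sides $k\ge\ell$, writes $S(G)$ as $S(G')$ plus the unit loss on each of the $N+1-k-\ell$ surviving edges plus the weight of the deleted edges, and then splits on $\ell\ge n+1$ versus $\ell\le n$. When $\ell\ge n+1$, the deleted weight is bounded by degree sums and $k+\ell\ge 2n+2$ immediately; when $\ell\le n$, the deleted weight is at most $\ell(\ell-1)+(N-\ell)+\ell\le n(n-1)+N$, while AM--GM gives $k+\ell\ge 2\sqrt{k\ell}\ge 2\sqrt{N}$, hence $k+\ell\ge 2n+2$ when $m\ge n+1$ and $k+\ell\ge 2n+1$ when $m\le n$. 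Monotonicity of $F_B$ then converts $S(G')\le F_B(N+1-k-\ell)$ into the stated bounds; the $\max$ of two terms in the lemma reflects precisely these two cases for $\ell$, not two pinned size patterns. So to repair your proof you must either supply a complete proof of the $\ell\le n+1$ reduction---which looks comparable in difficulty to the lemma itself---or abandon the pinning and run the size-free case analysis.
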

\begin{proof}
Consider a graph $G$ with bipartition $W=\{w_1,\ldots,w_k\}$ and $X=\{x_1,\ldots,x_\ell\}$ satisfying $k\ge \ell$. Furthermore define $w_i$ to have degree $y_i$ and $x_j$ to have degree $z_j$. Applying Lemma \ref{maximal2}, we may assume that $y_k=\ell$ and $z_\ell=k$. Then consider $G'$ when one removes $w_k$ and $x_\ell$. We find
\[F_B(N)=(S(G')+N+1-k-\ell)+\sum_{j=2}^{\ell}\min(y_j,\ell)+\sum_{j=2}^{k}\min(z_j,k)+\ell.\]
If $\ell\ge n+1$ then $k + l\ge 2n + 2$ so $G'$ having $N + 1 - k - l$ edges implies 
\begin{align*}
F_B(N) &\le S(G')+N+1-k-\ell-2(N-\ell)+\ell\\
&\le F(N-2n-1)+3(N-n)-2.
\end{align*}
Otherwise suppose that $n+1\le m\le 2n+1$ and $\ell\le n$. Then we find $k+\ell\ge 2\sqrt{k\ell}\ge 2\sqrt{N} > 2n + 1$ so $k + \ell\ge 2n + 2$ and
\begin{align*}
F_B(N) &\le (S(G')+N+1-k-\ell)+\ell(\ell-1)+(N-\ell)+\ell\\
&\le F_B(N-2n-1)+2N-2n-1+n(n-1)\\
&\le F_B(N-2n-1)+3N-3n-2.
\end{align*}
Finally, suppose that $1\le m\le n$ and $\ell\le n$. Then $k+\ell\ge 2\sqrt{k\ell}\ge 2\sqrt{N} > 2n$ so $k + \ell\ge 2n+1$ and we find 
\begin{align*}
F_B(N) &\le (S(G')+N+1-k-\ell)+\ell(\ell-1)+(N-\ell)+\ell\\
&\le F_B(N-2n)+2N+n(n-3),
\end{align*}
as desired.
\end{proof}
We are now in a position to prove Theorem \ref{main2}.
\begin{lemma}
Let $N=n^2+m$ with $1\le m\le 2n+1$. Then we have two cases.
\begin{itemize}
    \item If $1\le m\le n$ then $F_B(N)\le n^3+m^2$.
    \item If $n+1\le m\le 2n+1$ then $F_B(N)\le n^3+n^2+m(m-n).$
\end{itemize}
\end{lemma}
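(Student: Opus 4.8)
The plan is to establish both inequalities simultaneously by strong induction on $N$, feeding Lemma \ref{recur} into the inductive hypothesis and then matching each resulting expression against the claimed closed form by a short algebraic expansion. First I would verify the finitely many genuinely small values of $N$ by hand, namely those for which a reduction $N-2n-1$ or $N-2n$ falls outside the admissible range $1\le m\le 2n+1$ (equivalently, where the reduced second parameter would be nonpositive). For all larger $N$ both reduced quantities are honest instances of the theorem, so the inductive hypothesis applies to them.

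For the regime $n+1\le m\le 2n+1$, Lemma \ref{recur} yields $F_B(N)\le F_B(N-2n-1)+3(N-n)-2$. The key identity is $N-2n-1=(n-1)^2+(m-2)$, which exhibits the reduced instance as having bipartite parameters $(n-1,m-2)$ with $m-2$ ranging over $[n-1,2n-1]$; this lies in the first bipartite sub-regime precisely when $m=n+1$ and in the second sub-regime when $m\ge n+2$. In both situations, substituting the inductive formula for $F_B(N-2n-1)$ and expanding collapses the right-hand side exactly to $n^3+n^2+m(m-n)$, so the bound is attained with equality and the induction closes.

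For the regime $1\le m\le n$, Lemma \ref{recur} bounds $F_B(N)$ by the maximum of two quantities, and I would show each is at most $n^3+m^2$. Using $N-2n=(n-1)^2+(m-1)$, which always sits in the first bipartite sub-regime, the inductive hypothesis gives $F_B(N-2n)=(n-1)^3+(m-1)^2$; substituting this into the second term --- the one the proof of Lemma \ref{recur} actually controls by $F_B(N-2n)$ --- and expanding produces exactly $n^3+m^2$. The first term, via $N-2n-1=(n-1)^2+(m-2)$, evaluates to $n^3+m^2-(m-1)$ for $m\ge 2$, which is at most $n^3+m^2$; the single exceptional value $m=1$, where $N-2n-1=(n-1)^2-1$ must be re-expressed in lower parameters, is checked separately and still stays below the target. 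Hence the maximum of the two terms equals $n^3+m^2$, completing the induction.

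The main difficulty is bookkeeping rather than conceptual. As $m$ varies one must track which of the two bipartite sub-regimes each reduced instance $(n-1,m-2)$ or $(n-1,m-1)$ occupies, and separately dispatch the degenerate reductions in which the reduced second parameter vanishes or would go negative --- most notably $m=1$ in the first regime, where $N-2n-1=(n-1)^2-1$ must be re-expanded as $(n-2)^2+(2n-4)$ and evaluated in the second sub-regime. Checking that in every such boundary case both terms of the relevant maximum stay at most the claimed value, together with pinning down the precise finite set of base cases, is where attention is required; in the generic interior the expansions are routine and collapse to equality, which is exactly what certifies the constructions of Theorem \ref{main2} as optimal.
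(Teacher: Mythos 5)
Your proposal is correct and follows essentially the same route as the paper: induction on $N$ driven by Lemma \ref{recur}, rewriting the reduced edge counts as $N-2n-1=(n-1)^2+(m-2)$ and $N-2n=(n-1)^2+(m-1)$, applying the inductive hypothesis in the appropriate sub-regime, and expanding to recover the claimed bounds (the paper organizes the degenerate reductions as separate cases $m=1$ and $m=2$, exactly the vanishing/negative-parameter boundary cases you flag for separate treatment, e.g.\ your $m\ge 2$ claims implicitly use that the bound for a perfect square $(n-1)^2$ extends the formula continuously). The algebra you assert does collapse as claimed in every case, matching the paper's computations.
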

This gives Theorem \ref{main2} once one observes equality can be attained.
\begin{proof}
The proof proceeds by a direct induction on $N$. Note that the result is trivial for $N=1$, $N=2$ and can easily be verified for $N=3$ and $N=4$. Therefore we will assume that we are considering $N\ge 5$ and therefore $n\ge 2$ in the below analysis. 

Case 1: If $m=1$ then Lemma \ref{recur} gives that 
\begin{align*}
F_B(N)&\le\max(F_B((n-1)^2-1)+3N-3n-2,F_B((n-1)^2)+2N+n(n-3))\\
&\le\max(n^3-3n+5,n^3+1)=n^3+1,
\end{align*}
as desired.

Case 2: If $m=2$ then Lemma \ref{recur} gives 
\begin{align*}
F_B(N)&\le \max(F_B((n-1)^2)+3N-3n-2,F_B((n-1)^2+1)+2N+n(n-3))\\
&\le\max(n^3+3,n^3+4)=n^3+4,
\end{align*}
as desired.

Case 3: If $3\le m\le n$ then Lemma \ref{recur} gives 
\begin{align*}
F_B(N)&\le \max(F_B((n-1)^2+m-2)+3(N-n)-2,F_B((n-1)^2+(m-1))+2N+n(n-3))\\
&\le \max(n^3+m^2-m+1,n^3+m^2)=n^3+m^2,
\end{align*}
as desired.

Case 4: If $m=n+1$ then Lemma \ref{recur} gives
\begin{align*}
F_B(N)&\le F_B((n-1)^2+m-2)+3(N-n)-2\\
&\le n^3+m^2-m+1=n^3+n^2+m(m-n),
\end{align*}
where $m=n+1$ is used in the final deduction.

Case 5: If $n+1<m\le 2n+1$ then $m - 2\ge (n - 1) + 1$ and Lemma \ref{recur} gives
\begin{align*}
F_B(N)&\le F_B((n-1)^2+m-2)+3(N-n)-2\\
&\le n^3+n^2+m^2-mn,
\end{align*}
as desired.

Thus the inductive step follows in all cases and the proof is complete.
\end{proof}

\section{Maximum Specialty over Forests and Planar Graphs}
Unlike the previous sections, where the methods have been largely combinatorial, the key method for these two results is clever summation by parts. The algebraic casting of the problem was the key observation for the solution given by the contestant on the original Team Selection Test problem and the specific use of summation by parts also appears in Brendan McKay's answer to \cite{273694} where specialty of planar graphs with a specific number of vertices rather than edges is maximized.
\begin{theorem}\label{main3}
The maximum specialty over all forests with $N$ edges is $1$ if $N = 1$ and $2N - 2$ if $N\ge 2$.
\end{theorem}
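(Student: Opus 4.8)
The plan is to prove matching lower and upper bounds. For the lower bound I would exhibit the path $P$ on $N+1$ vertices (hence $N$ edges) as the extremal forest. Its two endpoints have degree $1$ and its $N-1$ internal vertices have degree $2$, so the two terminal edges each have weight $\min(1,2)=1$ while the remaining $N-2$ edges each have weight $\min(2,2)=2$. Summing gives $S(P)=2(N-2)+2=2N-2$ when $N\ge 2$, and $S(P)=1$ when $N=1$, which shows the claimed value is attainable.

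For the upper bound, the key device is the summation-by-parts (layer-cake) identity
\[\min(\deg u,\deg v)=\sum_{t\ge 1}\mathbf{1}[\deg u\ge t]\,\mathbf{1}[\deg v\ge t].\]
Writing $V_t=\{v\in V(G):\deg v\ge t\}$ and $n_t=|V_t|$, I would swap the order of summation to obtain
\[S(G)=\sum_{uv\in E(G)}\min(\deg u,\deg v)=\sum_{t\ge 1}e(G[V_t]),\]
where $e(G[V_t])$ denotes the number of edges both of whose endpoints lie in $V_t$, i.e. the edge count of the induced subgraph on $V_t$.

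The crucial structural input is that any induced subgraph of a forest is again a forest, so $e(G[V_t])\le n_t-1$ whenever $n_t\ge 1$ (and $e(G[V_t])=0$ when $n_t=0$). After discarding isolated vertices, which affect neither $S(G)$ nor $N$, the nonempty layers are exactly $t=1,\ldots,\Delta$ where $\Delta=\max_v\deg v$, and $\sum_{t\ge 1}n_t=\sum_v\deg v=2N$. Hence
\[S(G)\le\sum_{t=1}^{\Delta}(n_t-1)=2N-\Delta.\]
If $\Delta\ge 2$ this immediately gives $S(G)\le 2N-2$. The remaining case $\Delta\le 1$ means $G$ is a disjoint union of edges, for which every edge has weight $1$ and $S(G)=N\le 2N-2$ for $N\ge 2$; the case $N=1$ is then immediate.

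The main obstacle, and really the only nontrivial idea, is recognizing the layer decomposition $S(G)=\sum_t e(G[V_t])$; once it is in hand, the forest bound $e(G[V_t])\le n_t-1$ together with $\sum_t n_t=2N$ makes the estimate essentially automatic. The one point to watch is the boundary case $\Delta\le 1$, where the bound $2N-\Delta$ is too weak and must be replaced by the direct computation $S(G)=N$.
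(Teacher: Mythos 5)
Your proof is correct, and its engine is the paper's own identity in dual form: the layer-cake decomposition $S(G)=\sum_{t\ge 1}e(G[V_t])$ is exactly the paper's summation by parts $S(G)=\sum_i (d_i-d_{i+1})\bigl(\sum_{j\le i}a_j\bigr)$, since each nonempty level set $V_t$ is a prefix $\{v_1,\dots,v_{n_t}\}$ of the degree-sorted vertex list and the coefficient $d_i-d_{i+1}$ counts the thresholds $t$ for which $n_t=i$; both proofs then invoke the same key fact that an induced subgraph of a forest on $k$ vertices has at most $k-1$ edges, arriving at the same bound $2N-\Delta=2N-d_1$. Where you genuinely diverge is in the surrounding structure. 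The paper first merges leaves of distinct components to reduce to a tree on $N+1$ vertices, which is what forces $d_1\ge 2$; you work with an arbitrary forest and instead dispose of the only problematic case $\Delta\le 1$ (a disjoint union of edges, where $S(G)=N$) by direct computation. This makes your argument slightly more self-contained—there is no need to verify that leaf-merging preserves being a forest and does not decrease specialty—and the level-set phrasing also spares you from sorting vertices and introducing the auxiliary quantities $a_i$. Both routes are short and yours is marginally cleaner for this theorem, though it is worth noting that the paper's sorted-prefix formulation is the one it reuses for the planar case (Theorem \ref{main4}), where the prefix bounds $3i-6$ and a case analysis on the individual degrees $d_1,d_2,d_3,d_k$ are needed.
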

\begin{proof}
The case $N = 1$ is clear, so let $N\ge 2$. A specialty of $2N - 2$ is achieved with a path with $N$ edges.

Notice that given a forest, we can take two leaves in separate connected components and merge them. The resulting graph is still a forest, and the specialty has not decreased. Therefore, it suffices to study a graph $G$ that is a tree with $N$ edges and $N + 1$ vertices.

Let $d_1\ge\cdots\ge d_{N + 1}$ be the degrees of vertices $v_1, \ldots, v_{N + 1}$ of the graph $G$. Let $a_i$ for $1\le i\le N + 1$ be the number of edges between $v_i$ and all $v_j$ with $j < i$. Using summation by parts we obtain
\[S(G) = \sum_{i = 1}^{N + 1} a_id_i = \sum_{i = 1}^{N + 1} (d_i - d_{i + 1})\bigg(\sum_{j = 1}^i a_j\bigg),\]
where $d_{N + 2}$ is defined to be $0$. Now each difference $d_i - d_{i + 1}\ge 0$, and $\sum_{j = 1}^i a_j$ is the number of edges in the induced subgraph of $G$ obtained by restricting to vertices $v_1, \ldots, v_i$ only. Notice the corresponding subgraph has $i$ vertices and contains no cycles, so has at most $i - 1$ edges. Therefore $\sum_{j = 1}^i a_j\le i - 1$ for $1\le i\le N + 1$. Then
\[S(G)\le \sum_{i = 1}^{N + 1} (i - 1)(d_i - d_{i + 1}) = \sum_{i = 2}^{N + 1} d_i = 2N - d_1.\]
Since $N\ge 2$, and $G$ is a tree with $N$ edges, we know $d_1 + \cdots + d_{N + 1} = 2N$, which implies the last equality. Furthermore, this gives $d_1\ge\frac{2N}{N + 1} > 1$ so $d_1\ge 2$. Therefore $S(G)\le 2N - 2$, and the result follows.
\end{proof}

For the planar case, first notice that the graphs that $S(G)$ on $N$ edges, without any restrictions, are planar for $N\le 9$. Therefore it suffices to study $N\ge 10$. We now provide a inductive construction which provides the maximal specialty for $N\ge 33$.

Define $G_{N}$ as follows. First construct three vertices $v_1, v_2, v_3$ connected in a triangle and take $k = \left\lfloor\frac{N}{3}\right\rfloor + 2$. In each successive stage, add $v_i$ where $v_i$ connects to $v_{i - 1}, v_{i - 2}, v_{i - 3}$. Continue until we have $k$ vertices and $3k - 6$ edges total. This is clearly planar, since each new vertex can be added on the outside of the planar embedding of the graph we are constructing. Now, if $N\equiv 1\pmod{3}$ add an additional vertex $v_{k+1}$ that connects only to $v_k$. If $N\equiv 2\pmod{3}$ add an additional vertex $v_{k+1}$ that connects to $v_k, v_{k - 1}$. When $k\ge 6$ the graph on $\{v_1, \ldots, v_k\}$ has $k - 6$ vertices of degree $6$, and two vertices of each degree $3, 4, 5$. Indeed, $\deg v_1 = \deg v_k = 3, \deg v_2 = \deg v_{k - 1} = 4, \deg v_3 = \deg v_{k - 2} = 5$, and the rest have degree $6$. It is easy to check that for $k\ge 9$ the weights $3, 4, 5$ are each assigned to $6$ edges and the remaining edges have weight $6$. Thus the specialty of the graph on $\{v_1, \ldots, v_k\}$ with $N = 3k - 6$ edges is $6(N - 18) + 72 = 6N - 36$. Furthermore, adding $v_{k+1}$ in the case when $N\equiv 1\pmod{3}$ adds a total of $4$ to the specialty, and adding $v_{k+1}$ in the $N\equiv 2\pmod{3}$ case adds a total of $10$ to the specialty. Therefore the total specialty is $6(N - 1) - 36 + 4 = 6N - 38$ and $6(N - 2) - 36 + 10 = 6N - 38$ in these cases. Hence our construction, which is valid when $k\ge 9$ or $N\ge 21$, yields
\[S(G_N) = \bigg\{\begin{array}{lr}3N - 36, & \text{if } N\equiv 0\pmod{3}\\
6N - 38, & \text{otherwise}.\end{array}\]
\begin{figure}[h]
\begin{center}
  \includegraphics[width=.3\linewidth]{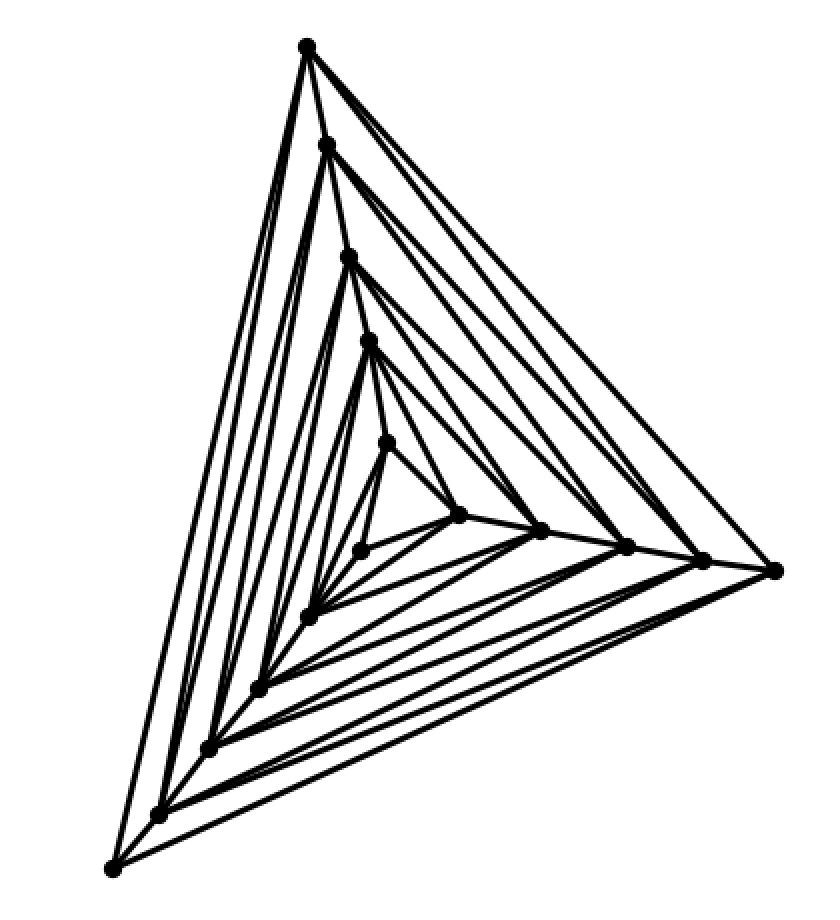}
  \end{center}
\caption{Maximal graph in Theorem \ref{main4} with $N = 42$ edges}
\label{fig:arckdef3}
\end{figure}
As we will see, this construction turns out to be optimal for the regime $N\ge 33$.
\begin{theorem}\label{main4}
The maximum specialty over all planar graphs with $N\ge 33$ edges is $6N-36$ if $N\equiv 0\pmod 3$ and $6N-38$ otherwise.
\end{theorem}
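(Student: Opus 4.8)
The plan is to run the summation-by-parts argument of Theorem~\ref{main3} with the forest edge bound $i-1$ replaced by the planar edge bound $3i-6$. After deleting isolated vertices (which changes neither $N$ nor $S$), let $G$ have $v$ vertices of degrees $d_1\ge d_2\ge\cdots\ge d_v$, and for each $i$ let $a_i$ count the edges from $v_i$ back to $\{v_1,\dots,v_{i-1}\}$, so $E_i:=\sum_{j\le i}a_j$ is the number of edges in the induced subgraph on the $i$ largest-degree vertices. Exactly as before, an edge $v_iv_j$ with $i>j$ has weight $d_i$, so $S(G)=\sum_i a_id_i=\sum_i E_i(d_i-d_{i+1})$ with $d_{v+1}:=0$. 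Since each such induced subgraph is planar, $E_i\le 3i-6$ for $i\ge3$ (and $E_1=0$, $E_2\le1$), and a second Abel summation collapses this to the clean bound
\[ S(G)\le 6N-3d_1-2d_2-d_3. \]

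The easy regime is immediate: if $3d_1+2d_2+d_3\ge 38$ then $S(G)\le 6N-38$, which beats both targets. The constructed extremal graphs are (near-)triangulations with $d_1=d_2=d_3=6$, i.e.\ $3d_1+2d_2+d_3=36$, so the clean bound already captures the main term and all remaining work lies in the band $3d_1+2d_2+d_3\le 37$ and below. Writing $\Sigma:=(6N-3d_1-2d_2-d_3)-S(G)\ge0$ for the total slack, the theorem becomes the assertion $3d_1+2d_2+d_3+\Sigma\ge 36$ when $N\equiv0\pmod3$ and $\ge 38$ otherwise.

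The extra ``$-2$'' for $N\not\equiv0\pmod3$ I would extract from planarity applied at the first two degree levels. Here $\Sigma$ decomposes over degree thresholds as $\Sigma=\sum_{\ell\ge1}(g(m_\ell)-e_\ell)$, where $G_{\ge\ell}$ is the subgraph on the $m_\ell$ vertices of degree $\ge\ell$, $e_\ell$ is its number of edges, and $g(m)=3m-6$ for $m\ge3$ is the planar bound (with $g(1)=0,g(2)=1$). The level-$1$ term is the triangulation deficiency $D:=(3v-6)-N\ge 0$, and since $3v-6\equiv0\pmod3$ we have $D\equiv -N\pmod3$, so $N\not\equiv0$ forces $D\ge1$. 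If $D\ge2$ the level-$1$ term alone gives $\Sigma\ge2$; if $D=1$, a one-line planarity count on $G_{\ge2}$ (comparing $e_2\ge N-t$ against the planar bound $e_2\le 3(v-t)-6$ for $t$ vertices of degree $1$, which forces $D\ge 2t$) rules out degree-$1$ vertices, so $G_{\ge2}=G$ and level $2$ contributes a second copy of $D$, again giving $\Sigma\ge2$. Combined with $3d_1+2d_2+d_3\ge36$ this yields the sharp $6N-38$.

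The main obstacle is the small-degree regime $3d_1+2d_2+d_3\le 35$, where the top degrees are all close to the planar average and the clean bound does not suffice. Here one must show the slack $\Sigma$ grows to cover the deficit $36-(3d_1+2d_2+d_3)$: each vertex of degree $d<6$ shrinks the subgraphs $G_{\ge\ell}$ for $\ell>d$ below what $g(m_\ell)$ would allow, depositing slack at the higher levels (for instance the degree sequence $(6,6,5^{10},4)$ on $13$ vertices, where the single degree-$4$ vertex supplies the missing unit at level $5$, so that $S=162=6N-36$ despite the clean bound reading $163$). Turning this heuristic into a uniform inequality, together with excluding the finitely many dense near-regular configurations that would otherwise beat $6N-36$ and thereby pinning down the threshold $N\ge33$ (the icosahedron, with $N=30$, already violates $6N-36$), is where I expect the genuine effort to lie; the summation by parts is only the scaffolding.
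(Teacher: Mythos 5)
Your scaffolding matches the paper's: the same summation by parts with the planar bound $3i-6$ yields $S(G)\le 6N-3d_1-2d_2-d_3$, and your slack decomposition over degree thresholds is a valid (and in fact cleaner) route to the mod-$3$ correction. In particular, your argument that $D=(3v-6)-N\equiv -N\pmod 3$ forces $D\ge 1$ when $3\nmid N$, and that $D=1$ excludes degree-$1$ vertices (via $2t\le D$) so that the level-$2$ term contributes a second unit of slack, replaces the paper's more delicate handling of $\sum a_i\le 3k-8$ and its equality-case analysis for $N\equiv 2\pmod 3$. That part is a genuine improvement in transparency.

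However, the proposal has a genuine gap: the regime $3d_1+2d_2+d_3\le 35$ (your band $\le 37$) is exactly where the paper's proof does its real work, and you explicitly defer it rather than prove it. The missing content is two-fold. First, there is a short argument you never supply: $3d_1+2d_2+d_3\le 35$ forces $d_2\le 6$ and $d_3\le 5$, so at most one edge (the possible edge $v_1v_2$) has weight $6$ and every other edge has weight at most $5$, giving $S(G)\le 6+5(N-1)=5N+1\le 6N-38$ once $N\ge 39$. Your heuristic about slack ``depositing at higher levels'' does not by itself produce a uniform inequality of this kind, and without one the theorem is unproved for all large $N$, not just small $N$. Second, for the finitely many cases $33\le N\le 38$ the bound $5N+1$ is too weak, and the paper needs a case analysis on the minimum degree $d_k$ that invokes nonexistence results for planar degree sequences (Theorems 1 and 2 of Schmeichel--Hakimi, e.g.\ no $5$-regular planar graph on $14$ vertices) to kill near-regular configurations such as $(6,5,\ldots,5)$ and $(6,6,5,\ldots,5,1)$; these are precisely the ``dense near-regular configurations'' you name but do not exclude. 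Since the theorem's threshold $N\ge 33$ is sharp (the icosahedron-based examples beat the bound for $N=30,31,32$), any proof must engage with these borderline degree sequences; as written, your proposal establishes the theorem in no case where $3d_1+2d_2+d_3\le 35$, and hence does not prove the statement.
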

\begin{proof}
The constructions are given above. Now recall the classical fact that any planar graph with $n\ge 3$ vertices has at most $3n-6$ edges. We can exploit this inequality in the same vein as in the proof of Theorem \ref{main3}. Since $N\ge 33$ in this proof, the graphs we will consider always have at least $9$ vertices.

Let $v_1, \ldots, v_k$ be the vertices of $G$ with $N$ edges, and suppose that $d_1, \ldots, d_k$ are the respective degrees of the $v_i$ with $d_1\ge\cdots\ge d_k\ge 1$. Furthermore let $a_i$ for $1\le i\le k$ be the number of edges between $v_i$ and $v_j$ with $j < i$. Note that $\sum_{j = 1}^i a_j$ is the number of edges in the induced subgraph of $G$ obtained by restricting to vertices $v_1, \ldots, v_i$ only. Therefore $\sum_{j = 1}^i a_j\le 3i - 6$ for $i\ge 3$ and $a_1 = 0, a_1+a_2\le 1$. Furthermore, $\sum_{j = 1}^i a_j\le N$ for all $i$. Finally, for convenience let $d_{k + 1} = 0$. Then, as before,
\[S(G) = \sum_{i = 1}^k a_id_i = \sum_{i = 1}^k (d_i - d_{i + 1})\bigg(\sum_{j = 1}^i a_j\bigg).\]
We break into cases based on $N \pmod 3$.

Case 1: $N\equiv 0\pmod{3}$, in which case we find
\begin{align*}
S(G)&\le (d_2 - d_3) + \sum_{i = 3}^k (3i - 6)(d_i - d_{i + 1})\\
&\le d_2 + 2d_3 + 3\sum_{i = 4}^k d_i\\
&\le 6N - 3d_1 - 2d_2 - d_3,
\end{align*}
using $\sum_{i = 1}^k d_i = 2N$.

Case 2: $N\equiv 1\pmod{3}$, in which case we find
\begin{align*}
S(G)&\le (d_2 - d_3) + \sum_{i = 3}^{k - 1} (3i - 6)(d_i - d_{i + 1}) + (3k - 8)(d_k - d_{k + 1})\\
&\le d_2 + 2d_3 + 3\sum_{i = 4}^{k - 1} d_i + d_k\\
&\le 6N - 3d_1 - 2d_2 - d_3 - 2d_k\\
&\le 6N - 3d_1 - 2d_2 - d_3 - 2,
\end{align*}
where we used the fact that $\sum_{i=1}^k a_i = N$ and $\sum_{i=1}^k a_i\le 3k - 6$ to deduce $\sum_{i = 1}^k a_i\le 3k - 8$, based on the modular condition on $N$.

Case 3: $N\equiv 2\pmod{3}$, in which case we find
\begin{align*}
S(G)&\le (d_2 - d_3) + \sum_{i = 3}^{k - 1} (3i - 6)(d_i - d_{i + 1}) + (3k - 7)(d_k - d_{k + 1})\\
&\le d_2 + 2d_3 + 3\sum_{i = 4}^{k - 1} d_i + 2d_k\\
&\le 6N - 3d_1 - 2d_2 - d_3 - d_k\\
&\le 6N - 3d_1 - 2d_2 - d_3 - 1,
\end{align*}
where we used the fact that $\sum_{i=1}^{k}a_i = N$ and $\sum_{i=1}^{k}a_i\le 3k - 6$ to deduce $\sum_{i=1}^{k}a_i\le 3k - 7$, based on the modular condition on $N$. However, we can improve this bound by carefully considering the possible equality cases: we must have $d_k = a_k = 1$, hence $N = \sum_{i=1}^{k}a_i = \sum_{i=1}^{k-1}a_i + 1\le 3(k - 1) - 6 + 1 = 3k - 8$. Hence for any hypothetical equality cases $G$, we can sharpen to
\begin{align*}
S(G)&\le (d_2 - d_3) + \sum_{i = 3}^{k - 1} (3i - 6)(d_i - d_{i + 1}) + (3k - 8)(d_k - d_{k + 1})\\
&\le d_2 + 2d_3 + 3\sum_{i = 3}^{k - 2} d_i + d_k\\
&\le 6N - 3d_1 - 2d_2 - d_3 - 2d_k\\
&\le 6N - 3d_1 - 2d_2 - d_3 - 2
\end{align*}
and therefore there in fact are no equality cases, so that
\[S(G)\le 6N - 3d_1 - 2d_2 - d_3 - 2\]
for all $G$ with $N$ edges and $N \equiv 2\pmod3$.
\newline\newline
To finish note that if $G$ satisfies $3d_1 + 2d_2 + d_3\ge 36$, then we are done regardless of which case we are in. Therefore it suffices to consider $3d_1 + 2d_2 + d_3\le 35$. Thus since $d_3\ge 1$ we have $5d_2\le 3d_1 + 2d_2\le 34$ and hence $d_2\le 6$. Similarly since $6d_3\le 3d_1 + 2d_2 + d_3\le 35$ it follows that $d_3\le 5$, and thus $d_i\le d_3\le 5$ for $i\ge 3$. Now at most one edge has weight $d_2 = 6$, which is a potential edge between $v_1, v_2$. The remaining edges all have at least one vertex of degree at most $5$, so
\[S(G)\le 6 + 5(N - 1)\le 6N - 38\]
and the result follows for $N\ge 39$. For $33\le N\le 38$ a more careful analysis involving $d_k$ is necessary. Clearly $d_k\le 5$ since $G$ is planar, and we may assume we are working in the case where $3d_1 + 2d_2 + d_3 < 36$. In particular, it is still true that at most one edge has weight $6$ and the rest have weight $\le 5$. We can also assume the graph $G$ under consideration is a connected graph, since combining two vertices on the convex hulls of the embeddings of disconnected components $G_1, G_2$ yields a graph that is connected and has at least the same specialty as before.

Case 1: Suppose that $2\le d_k\le 3$. If $d_k=2$ then note that \[S(G)\le 6+2(2)+5(N-3)=5N-5\le 6N-38,\] which follows as $N\ge 33$. Otherwise $d_k=3$ and \[S(G)\le 6+3(3)+5(N-4)=5N-5\le 6N-38,\] which follows as $N\ge 33$.

Case 2: Suppose that $d_k=4$. Then note that
\[S(G)\le 6+4(4)+5(N-5)\le 6N-36,\]
which holds for $N\ge 33$. Thus the result is settled for $N\equiv 0\pmod{3}$. If $N\equiv 1\pmod{3}$, then we derived earlier the inequality
\[S(G)\le 6N-3d_1-2d_2-d_3-2d_k.\]
If $3d_1 + 2d_2 + d_3 + 2d_k\ge 38$ then it is settled. Otherwise, $3d_1 + 2d_2 + d_3 + 2d_k < 38$. Note that $30 = 38-2d_k>3d_1+2d_2+d_3\ge 6d_3$ so $d_3 < 5$. Also, $5d_2\le 3d_1 + 2d_2 < 30$ so $d_2\le 5$. Thus that all edges except perhaps between $v_1, v_2$ have weight $\le 4$ and
\[S(G)\le 4(N-1)+5<6N-38\]
for $N\ge 20$. Finally, if $N\equiv 2\pmod{3}$ then, similarly, we only need to handle the case when $3d_1+2d_2+d_3+d_k<38$. Then $6d_3<34$ or $d_3\le 5$ and $30\ge 38-d_3-d_k>3d_1+2d_2\ge 5d_2$ so $d_2 < 6$. Therefore it follows that
\[S(G)\le 5(N-4)+16\le 6N-38,\]
which holds for $N=35$ and $N=38$, the cases under consideration.

Case 3: Suppose that $d_k=5$. If $N\equiv 1 \pmod{3}$ then
\begin{align*}
S(G)&\le 6N-3d_1-2d_2-d_3-2d_k\\
&\le 6N-40<6N-38,
\end{align*}
as desired. If $N\equiv 2 \pmod{3}$ then
\[S(G)\le 6N-3d_1-2d_2-d_3-d_k,\]
so we only need consider the case $3d_1+2d_2+d_3+d_k<38$. That implies $3d_1<38-2d_2-d_3-d_k\le 18$, hence $d_1=5$. Therefore the degree sequence is $(5,\ldots,5)$, implying $N\equiv 0\pmod{5}$ and thus $N = 35$. Thus there are $14$ $5$'s in the degree sequence. However, by Theorem 1 in \cite{schmeichel1977planar}, no such planar graph exists. Finally, suppose $N\equiv 0\pmod{3}$. We need only consider cases when $3d_1+2d_2+d_3<36$. Thus $d_3 < 6$. Since $d_3\ge d_k = 5$, we conclude $d_3=5$. Now if $d_1=d_2=5$ then $N=\equiv 0\pmod{5}$ but we are only considering $N=33$ and $N=36$ in this case. If $d_1=6$ and $d_2=5$ the degree sequence is $(6,5,\ldots,5)$, hence $N = 33$ and there are $12$ $5$'s. However, no such planar graph exists by Theorem 2 in \cite{schmeichel1977planar}. Finally, if $d_1=d_2=6$ then note that the degree sequence is $(6,6,5,\ldots,5)$, hence $N=36$ is forced and there are $12$ $5$'s. In this case $S(G)\le 5(N-1)+6$, which is precisely $1$ more than the claimed bound of $6N-36$. For equality to occur, however, there must exist an edge between the two vertices of degree $6$. Removing this, we are left with a planar graph on $14$ vertices, each with degree $5$. However, this does not exist by Theorem 1 in \cite{schmeichel1977planar}. Therefore this case is complete.

Case 4: Finally, suppose that $d_k=1$. If $N\equiv 0\pmod{3}$ then $S(G)\le 6+1+5(N-2)$, which is at most the claimed bound of $6N-36$ when $N\ge 33$, as desired. If $N\equiv 2\pmod{3}$ then by the same reasoning above  $S(G)\le 6+5(N-2)+1$ and this is at most the claimed bound of $6N-38$ for $N\ge 35$, as desired. Finally if $N\equiv 1 \pmod{3}$ then note that $S(G)\le 1+6+5(N-2)$ as above and this is less than $6N-38$ for $N=37$ and exactly one more than the claimed bound for $N=34$. Consider $N = 34$. In order to violate the claimed bound, equality must hold. In this case, we see $d_{k-1}\notin\{2,3,4\}$. In particular, in any of these cases there is an edge from $v_{k-1}$ not connected to $v_k$ and the above bound estimates that this edge has weight $5$ while it does not. If $d_{k-1}=1$ then $v_{k-1}$ and $v_k$ are forced to connect in the equality case. However, this creates a disconnected component in $G$, which we assumed was not the case. Thus $d_{k-1}\ge 5$. Remembering $3d_1+2d_2+d_3<36$, we have $d_3\le 5$ and thus $d_3=\cdots=d_{k-1}=5$. Furthermore, for this equality to hold we need the edge of weight $6$ between $v_1, v_2$. Thus $d_2=6$ and therefore since $3d_1+2d_2+d_3< 36$ we find $3d_1<19$, implying $d_1=6$. Finally, $G$ has degree sequence $(6,6,5,\ldots,5,1)$ with $11$ $5$'s. For equality to occur we have an edge between the two vertices of degree $6$ and removing this vertex gives $(5,\ldots,5,1)$ where there are $13$ $5$'s. Removing the vertex of degree $1$ leaves a planar graph $(5,\ldots,5,4)$ with $12$ $5$'s. By Theorem 2 in \cite{schmeichel1977planar}, such a planar graph does not exist and we are finished.
\end{proof}
To see that the threshold above is sharp, notice that the icosahedral graph $I$ is planar, $5$-regular, has $N = 30$ edges, and satisfies $S(I) = 5\cdot 30 > 6\cdot 30 - 36$. Select arbitrary edge $uv$ of $I$ and add a new vertex attached only to $u, v$ to create planar graph $I'$ with $N = 32$ edges. Delete edge $uv$ from $I'$ to create planar graph $I''$ with $N = 31$ edges. Then $S(I') = 6\cdot 1 + 5\cdot 29 + 2\cdot 2 > 6\cdot 32 - 38$ and $S(I'') = 5\cdot 29 + 2\cdot 2 > 6\cdot 31 - 38$. Therefore, the result above does not hold for $N = 30, 31, 32$. With little difficulty one can adapt the proof above to show that $I$ is optimal for $N=30$. Indeed, we have
\[S(G)\le 6\cdot 30 - 3d_1 - 2d_2 - d_1\le 150\]
unless $3d_1 + 2d_2 + d_3 < 30$, in which case $d_2\le 5$ follows. Hence all edges have weight $\le 5$, implying $S(G)\le 5\cdot 30 = 150$. We suspect that similar arguments will yield that either $I''$ and $I'$ or graphs with very similar degree sequences will be optimal for $N = 31, 32$, respectively.

We end by noting that, similar to a comment by Brendan McKay in \cite{273694}, if a graph $G$ satisfies the property that every subgraph has average degree at most $\Delta$, then its specialty satisfies $S(G)\le\Delta N$. The proof uses summation by parts in a identical manner to the above proofs. In particular, any graph family closed under minors, other than the set of all graphs, by \cite{mader1967homomorphieeigenschaften}, has linear specialty.

\section{Open Questions}
Given the results of Theorem \ref{main4}, we immediately ask the following question.
\begin{question}
What is the maximum specialty of a planar graph when restricted to $N$ edges with $N$ between $10$ and $32$ but not equal to $30$ edges? 
\end{question}
The results of this paper however otherwise settle the maximum specialty of a graph when restricted to a specific number of edges in the case of all graphs, bipartite graphs, forests, and planar graphs, and therefore it is natural to ask for finer control of specialty. In particular it is natural to ask which graphs maximize specialty with a fixed number of vertices and edges.  However note that the optimizing graphs in Theorems \ref{main}, \ref{main2}, \ref{main3}, \ref{main4} always have the minimum possible number of vertices and therefore one can simply add on isolated vertices until the required vertex count. Therefore it is necessary for one to further restrict to the case where $G$ is connected. Let $\mathcal{CG}(N, n)$ be the set of connected graphs with $N$ edges and $n$ vertices.
\begin{question}
What is the behaviour of 
\[F(N,n) = \max_{G\in\mathcal{CG}(N, n)} (S(G))?\]
In particular, how does the behaviour change when $N$ grows linearly in $n$ versus when $N$ grows quadratically in $n$? How does this behavior change when $G$ is further restricted to be bipartite?
\end{question}
\section{Acknowledgements}
The authors would like to thank Evan Chen for introducing them to the Question 1 and suggesting the associated general problem. The authors would also like Evan Chen and Colin Defant for reading the manuscript and providing helpful comments.

\bibliographystyle{plain}
\bibliography{name}

\end{document}